\numberwithin{equation}{section}
\numberwithin{figure}{section}
  \theoremstyle{remark}
  \newtheorem*{acknowledgement*}{\protect\acknowledgementname}
\theoremstyle{plain}
\newtheorem{thm}{\protect\theoremname}[section]
  \theoremstyle{plain}
  \newtheorem{lem}[thm]{\protect\lemmaname}
  \theoremstyle{remark}
  \newtheorem{rem}[thm]{\protect\remarkname}
  \theoremstyle{plain}
  \newtheorem{prop}[thm]{\protect\propositionname}
  \theoremstyle{plain}
  \newtheorem{question}[thm]{\protect\questionname}
\theoremstyle{plain}
  \providecommand{\acknowledgementname}{Acknowledgement}
  \providecommand{\lemmaname}{Lemma}
  \providecommand{\propositionname}{Proposition}
  \providecommand{\questionname}{Question}
  \providecommand{\remarkname}{Remark}
\providecommand{\theoremname}{Theorem}
\begin{document}

\title[Representations of groups over rings of length two]{Representations of reductive groups over finite local rings of length
two}

\author{Alexander Stasinski and Andrea Vera-Gajardo}

\address{Alexander Stasinski, Department of Mathematical Sciences, Durham
University, Durham, DH1 3LE, UK}

\email{alexander.stasinski@durham.ac.uk}

\address{Andrea Vera-Gajardo, Instituto de Matemática, Universidad de Valparaiso,
Gran Bretaña 1091, Playa Ancha, Valparaíso.}

\email{andreaveragajardo@gmail.com}
\begin{abstract}
Let $\F_{q}$ be a finite field of characteristic $p$, and let $W_{2}(\F_{q})$
be the ring of Witt vectors of length two over $\F_{q}$. We prove
that for any reductive group scheme $\G$ over $\Z$ such that $p$
is very good for $\G\times\F_{q}$, the groups $\G(\F_{q}[t]/t^{2})$
and $\G(W_{2}(\F_{q}))$ have the same number of irreducible representations
of dimension $d$, for each $d$. Equivalently, there exists an isomorphism
of group algebras $\C[\G(\F_{q}[t]/t^{2})]\cong\C[\G(W_{2}(\F_{q}))]$.
\end{abstract}

\maketitle

\section{Introduction\label{sec:Introduction}}

Let $\cO$ be a discrete valuation ring with maximal ideal $\mfp$
and residue field $\F_{q}$ with $q$ elements and characteristic
$p$. For an integer $r\geq1$, we write $\cO_{r}=\cO/\mfp^{r}$.
Let $\cO'$ be a second discrete valuation ring with the same residue
field $\F_{q}$, and define $\cO_{r}'$ analogously. For a finite
group $G$, and an integer $d\geq1$, let $\Irr_{d}(G)$ denote the
set of isomorphism classes of irreducible complex representations
of $G$ of dimension $d$. It has been conjectured by Onn \cite[Conjecture~3.1]{Uri-rank-2}
(for $\lambda=r^{n}$, in the notation of \cite{Uri-rank-2}) that for all integers $r,n,d\geq1$,
we have
\[
\#\Irr_{d}(\GL_{n}(\cO_{r}))=\#\Irr_{d}(\GL_{n}(\cO_{r}')),
\]
or equivalently, that there exists an isomorphism of group algebras
$\C[\GL_{n}(\cO_{r})]\cong\C[\GL_{n}(\cO_{r}')]$. This was proved
for $r=2$ by Singla \cite{Pooja-GLn}. The conjecture makes sense
also when $\GL_{n}$ is replaced by any other group scheme $\G$ of
finite type over $\Z$, although in general small primes have to be
excluded. The analogous result was proved by Singla \cite{Pooja-classicalgrps}
for $r=2$ when $\G$ is either $\SL_{n}$ with $p\nmid n$ or an
adjoint form of a classical group of type $B_{n}$, $C_{n}$ or $D_{n}$,
provided that $p\neq2$. Regarding the case $\SL_{n}$ when $p\mid n$,
see Section~\ref{sec:Further-directions}.

In the present paper, we generalise Singla's results to arbitrary
reductive group schemes for which $p$ is a very good prime. More
precisely, we prove that for all $d\geq1$ and any reductive group
scheme $\G$ over $\Z$ such that $p$ is a very good prime for $\G\times\F_{q}:=\G\times_{\Spec\Z}\Spec\F_q$,
we have
\[
\#\Irr_{d}(\G(\cO_{2}))=\#\Irr_{d}(\G(\cO_{2}')).
\]
It is not hard to show that $\cO_{2}$, and indeed any commutative
local ring of length two with residue field $\F_{q}$, must be isomorphic
to one of the rings $\F_{q}[t]/t^{2}$ or $W_{2}(\F_{q})$ (see Lemma~\ref{lem:length-two-rings}).
From now on, let $R$ be either of these two rings.

Our main result, which we will now explain, is more general than the
above result in the sense that it covers a large class of representations
when $p$ is arbitrary and all representations when $p$ is very good.
As we explain in Section~\ref{subsec:Clifford-theory}, every irreducible
representation of $\G(R)$ determines a conjugacy orbit of one-dimensional
characters $\psi_{\beta}$ of the kernel of the canonical map $\rho:\G(R)\rightarrow\G(\F_{q})$,
and the characters $\psi_{\beta}$ are parametrised by elements $\beta$
in the $\F_{q}$-points of the dual Lie algebra $\Lie(\G\times\F_{q})^{*}$.
For any such $\beta$, let $\Irr_{d}(\G(R)\mid\psi_{\beta})$ denote
the set of irreducible representations of $\G(R)$ lying above $\psi_{\beta}$
and of dimension $d$. 

Let $k$ be an algebraic closure of $\F_{q}$. Let $G=(\G\times k)(k)$
(a reductive group), and let $\mfg^{*}$ be the dual of its Lie algebra.
We have a Frobenius endomorphism $F:G\rightarrow G$ corresponding
to the $\F_{q}$-structure on $G$ given by $\G\times\F_{q}$, and
we define a compatible endomorphism $F^{*}$ on $\mfg^{*}$
such that $(\mfg^{*})^{F^{*}}=\Lie(\G\times\F_{q})^{*}$ (see Section~\ref{subsec:The-Lie-algebra}).
The $p$th power map on $k$ gives rise to a bijection $\sigma^{*}:(\mfg^{*})^{F^{*}}\rightarrow(\mfg^{*})^{F^{*}}$,
which is related to Frobenius twists (see Sections \ref{subsec:Frobenius-twists} and \ref{subsec:Clifford-theory}).
Our main result is then:

\begin{thm}\label{thm:A} 
For any $\beta\in(\mfg^{*})^{F^{*}}$ such that $p$ does not divide the order of the component group $C_{G}(\beta)/C_{G}(\beta)^{\circ}$, and any $d\in\N$, we have 
\[ 
\#\Irr_{d}(\G(\F_{q}[t]/t^{2})\mid\psi_{\sigma^*(\beta)}))=\#\Irr_{d}(\G(W_{2}(\F_{q}))\mid\psi_{\beta}). 
\]
Moreover, if $p$ is good, not a torsion prime for $G$ and if there exists a $G$-equivariant bijection $\mfg\iso\mfg^{*}$, then the above condition on $p$ holds for all $\beta$, so that for any $d\in\N$, we have \[ \#\Irr_{d}(\G(\F_{q}[t]/t^{2}))=\#\Irr_{d}(\G(W_{2}(\F_{q}))). \]
\end{thm}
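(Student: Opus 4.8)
The plan is to reduce the statement to a count of irreducible representations lying above fixed characters $\psi_\beta$, and then use Clifford theory relative to the congruence kernel $K = \ker(\rho\colon \G(R)\to\G(\F_q))$. Concretely, $K$ is abelian in both cases (as $R$ has length two), and can be identified with the additive group $\Lie(\G\times\F_q)(\F_q) = \mfg^{F}$, so $\widehat{K}$ is identified with $(\mfg^*)^{F^*}$ via the characters $\psi_\beta$. The set $\Irr_d(\G(R))$ decomposes as a disjoint union over $\G(\F_q)$-orbits of $\beta$ of the sets $\Irr_d(\G(R)\mid \psi_\beta)$, and on each orbit the contribution depends only on the stabiliser data. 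So the first step is to set up this decomposition uniformly for $R = \F_q[t]/t^2$ and $R = W_2(\F_q)$ and observe that the group $\G(\F_q)$, its action on $(\mfg^*)^{F^*}$, and hence the orbits and stabilisers are \emph{literally the same} in both cases; the only difference is in the structure of the stabiliser $\G(R)_\beta$ as an extension of $C_{\G(\F_q)}(\beta)$ by $K$.

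The second step is the heart of the matter: given $\beta$ with $p\nmid [C_G(\beta):C_G(\beta)^\circ]$, one shows that the two relevant stabiliser extensions — one over $\F_q[t]/t^2$, one over $W_2(\F_q)$ — have the same number of irreducible representations of each dimension above $\psi_\beta$. The natural mechanism is that the obstruction to extending $\psi_\beta$ from $K$ to its stabiliser, or more precisely the relevant projective representation theory / Heisenberg-type quotient, is controlled by a symplectic or quadratic form on a subquotient of $K$ together with a cohomology class, and the Frobenius-twist bijection $\sigma^*$ is exactly what matches the $W_2$ situation (where squaring in the ring interferes, via the Witt-vector addition $x + y + (\text{Teichmüller correction})$) with the split situation over $\F_q[t]/t^2$. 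I would expect the proof to exhibit an explicit isomorphism, or at least a dimension-preserving bijection on $\Irr_d(\cdot\mid\psi_\beta)$, between the preimages of the stabiliser in the two groups, after twisting $\beta$ by $\sigma^*$ on one side. The condition $p\nmid [C_G(\beta):C_G(\beta)^\circ]$ presumably guarantees that $C_G(\beta)^F$-representation theory is insensitive to the difference, e.g.\ that a certain Weil-type representation or a splitting exists.

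For the "moreover" clause, the argument is then purely about the group $G$: one invokes the classical fact (Springer–Steinberg, or Slodowy) that when $p$ is good and not a torsion prime and there is a $G$-equivariant bijection $\mfg\iso\mfg^*$ (e.g.\ a non-degenerate invariant bilinear form, which exists when $p$ is very good), the centralisers $C_G(\beta)$ for $\beta\in\mfg^*$ are "nice" — in particular connected, or at least have component group of order prime to $p$. Transporting via the $F$-equivariant bijection and noting that $F$-fixed points behave well, the hypothesis $p\nmid[C_G(\beta):C_G(\beta)^\circ]$ holds for every $\beta\in(\mfg^*)^{F^*}$. Then summing the first part of the theorem over all $\G(\F_q)$-orbits on $(\mfg^*)^{F^*}$, and observing that $\sigma^*$ is a bijection of $(\mfg^*)^{F^*}$ that permutes the $\G(\F_q)$-orbits (so the sum over $\beta$ of $\#\Irr_d(\cdot\mid\psi_{\sigma^*(\beta)})$ equals the sum over $\beta$ of $\#\Irr_d(\cdot\mid\psi_\beta)$), yields $\#\Irr_d(\G(\F_q[t]/t^2)) = \#\Irr_d(\G(W_2(\F_q)))$.

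The main obstacle, I expect, is the second step: producing the dimension-preserving bijection between representations above $\psi_\beta$ for the two stabilisers. The difficulty is that $\G(W_2(\F_q))$ is not a semidirect product $\G(\F_q)\ltimes K$ the way $\G(\F_q[t]/t^2)$ nearly is, so one cannot simply compare on the nose; one must understand how the non-split extension, combined with the Frobenius twist $\sigma^*$ coming from the $p$-th power map in Witt arithmetic, conspires to give the same projective representation theory of $C_G(\beta)^F$ over $K$. Controlling the relevant $2$-cocycle (showing the two central extensions of $C_G(\beta)^F$ by $\C^\times$ that arise are cohomologous, after the twist) — and using exactly the hypothesis on the component group to do so — is where the real work lies.
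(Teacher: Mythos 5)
Your Clifford-theoretic framing and your handling of the ``moreover'' clause are essentially the paper's: one decomposes $\Irr_d(\G(R))$ according to the orbit of $\beta\in(\mfg^*)^{F^*}$ parametrising the character $\psi_\beta$ of the congruence kernel, observes that the group $\G(\F_q)$, its coadjoint action, the orbits, and the quotients $C_{G}(\beta)^F$ are identical on the two sides (after the $\sigma^*$-twist), and reduces the ``moreover'' part to the known facts that in good non-torsion characteristic with an invariant form the component groups $A(\beta)$ have order prime to $p$. That much is fine.

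The gap is in what you yourself call ``the heart of the matter.'' You need that for each $\beta$ with $p\nmid|A(\beta)|$, the two stabilisers have the same number of irreducibles of each dimension lying above $\psi_\beta$; you sketch this as matching projective-representation data (a symplectic form plus a 2-cocycle in $H^2(C_G(\beta)^F,\C^\times)$) between the $\F_q[t]/t^2$ and $W_2(\F_q)$ extensions, and flag it as the real work. That speculated mechanism is never carried out, and it is not what the paper does. The paper instead proves something stronger and more uniform: the obstruction \emph{vanishes on both sides}, i.e.\ $\psi_\beta$ extends to a one-dimensional character of the full stabiliser $C_{G_2}(\beta)^F$ (Proposition~\ref{prop:extension-exists}). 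Given such an extension, Clifford theory (Lemma~\ref{lem:Clifford-theory}) hands you a dimension-preserving bijection $\Irr(C_G(\beta)^F)\to\Irr(G_2^F\mid\psi_\beta)$, $\theta\mapsto\Ind(\theta\tilde\psi_\beta)$, and since $C_G(\beta)^F$ and the index $[G^F:C_G(\beta)^F]$ depend only on $\F_q$-data, the counts on the two sides agree outright --- no comparison of cocycles is needed.

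Proving that the extension exists is where all the geometry enters, and none of it appears in your sketch. The paper constructs (Lemma~\ref{lem:H_beta}) a closed subgroup $H_\beta\le C_{G_2}(\beta)^\circ$ with $H_\beta G^1$ maximal unipotent in $C_{G_2}(\beta)^\circ$, $H_\beta\cap G^1=\exp(\Lie U)$, and $\beta(\Lie U)=0$; this uses that the $G$-conjugates of $\mfb^*=\{f:f(\Lie U)=0\}$ cover $\mfg^*$ (Lemma~\ref{lem:conjugates-cover}, a dual-Lie-algebra analogue of Grothendieck's theorem) and a Borel fixed-point argument (Lemma~\ref{lem:key-Borel-centr}). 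One then extends $\psi_\beta$ to a Sylow $p$-subgroup $(H_\beta G^1)^{F^m}$ trivially on $H_\beta^{F^m}$ (this uses $\beta(\Lie U)=0$), invokes the hypothesis $p\nmid|A(\beta)|$ to know this really is a Sylow $p$-subgroup of $C_{G_2}(\beta)^{F^m}$ (Lemma~\ref{lem:ident-comp-p-Sylow}), applies the Sylow-extension lemma (Lemma~\ref{lem:p-Sylow-extend}), and restricts from $F^m$-fixed to $F$-fixed points. Your proposal, as written, contains none of this and would need to supply it (or an equivalent) to close the argument; the cocycle-matching route you gesture at is a genuinely different and unverified strategy, and the role you assign the component-group hypothesis (``guarantees a Weil-type representation or splitting exists'') is not where it is actually used.
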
The conditions on $p$ in the theorem hold for example when $p$ is
very good for $G$, or when $G=\GL_{n}$ (see Section~\ref{subsec:Very-good-primes}).
The proof builds on all the preceding results of the paper, and is
concluded in Section~\ref{subsec:Proof-of-the}. 

\subsubsection*{Method of proof and overview of the paper}

Our proof of Theorem~\ref{thm:A} is based on geometric properties
of the dual Lie algebra, together with results on centralisers in
algebraic groups, and group schemes over local rings. We give an outline
of the main steps of the proof, which may also serve as an overview
of the contents of the paper. We define a connected algebraic group
$G_{2}$ over $k$ with a surjective homomorphism $\rho:G_{2}\rightarrow G$
and a Frobenius map $F$ such that $G_{2}^{F}=\G(R)$. For each $\beta\in(\mfg^{*})^{F^{*}}$,
we want to show that $\#\Irr_{d}(\G(R)\mid\psi_{\beta})$ depends
only on structures over $\F_{q}$ (and not the choice of $R$ with
residue field $\F_{q}$). By elementary Clifford theory (see Lemma~\ref{lem:Clifford-theory}),
this will follow if there exists an extension of the character $\psi_{\beta}$
to its stabiliser 
\[
C_{G_{2}}(f)^{F}=C_{G_{2}^{F}}(f)
\]
in $G_{2}^{F}$, where $G_{2}^{F}$ acts on $(\mfg^{*})^{F^{*}}$
via its quotient $G^{F}$ and the coadjoint action, and $f=\beta$
or $(\sigma^{*})^{-1}(\beta)$ depending on whether $R=\F_{q}[t]/t^{2}$
or $W_{2}(\F_{q})$. Most of the paper is devoted to proving the existence
of an extension of $\psi_{\beta}$.

First, we use the known fact (Lemma~\ref{lem:p-Sylow-extend}) that
if $\psi_{\beta}$ extends to a Sylow $p$-subgroup of its stabiliser,
then it extends to the whole stabiliser. To show that $\psi_{\beta}$
extends to a Sylow $p$-subgroup, we work in the connected reductive
group $G$, as well as a connected algebraic group $G_{2}$ over $k$,
which we define using the Greenberg functor, and which is isomorphic
(as abstract group) to $\G(k[t]/t^{2})$ or $\G(W_{2}(k))$. We let
$G^{1}$ denote the kernel of $\rho$. Our key lemma (Lemma~\ref{lem:H_beta})
says that there exists a closed subgroup $H_{\beta}$ of $C_{G_{2}}(\beta)$
such that $H_{\beta}G^{1}$ is a maximal unipotent subgroup of $C_{G_{2}}(\beta)^{\circ}$,
$H_{\beta}\cap G^{1}=\exp(\Lie(U))$ and $\beta(\Lie(U))=0$. Here $U$ is the unipotent radical of a Borel subgroup of $G$ and
$\exp$ is a certain isomorphism between $\Lie(G)$ and $G^{1}$.
We show that if $p$ satisfies the conditions of the second part of Theorem~\ref{thm:A},
then $p$ does not divide the order of $C_{G}(\beta)/C_{G}(\beta)^{\circ}$,
and hence $H_{\beta}^{F^{n}}(G^{1})^{F^{n}}$ is a Sylow $p$-subgroup
of $G_{2}^{F^{n}}$ for any power $n$ such that $H_{\beta}$ is stable
under $F^{n}$ (see Lemmas~\ref{lem:p-not-div-comp-grp} and \ref{lem:ident-comp-p-Sylow}).
The properties $H_{\beta}\cap G^{1}=\exp(\Lie(U))$ and $\beta(\Lie(U))=0$
imply that $\psi_{\beta}$ is trivial on
$(H_{\beta}\cap G^{1})^{F^{n}}$. Thus, we can extend $\psi_{\beta}$
to $H_{\beta}^{F^{n}}(G^{1})^{F^{n}}$ by defining the extension to
be trivial on $H_{\beta}^{F^{n}}$. This implies that $\psi_{\beta}$
extends to its stabiliser $C_{G_{2}}(f)^{F^{n}}$ in $G_{2}^{F^{n}}$,
and restricting this extension (which is one-dimensional) to $C_{G_{2}}(f)^{F}$,
we finally obtain the sought-after extension.

In order to prove Lemma~\ref{lem:H_beta}, we need a couple of geometric
lemmas. First, we prove that the union of duals of Borel subalgebras
cover the dual Lie algebra (see Lemma~\ref{lem:conjugates-cover}).
This is an analogue of a theorem of Grothendieck that the union of
Borel subalgebras cover the Lie algebra, but our proof is analogous
to one by Borel and Springer. Next, we prove that any maximal connected
unipotent subgroup of $C_{G}(\beta)$ is contained in a maximal unipotent
subgroup of $G$ on whose Lie algebra $\beta$ is zero (see Lemma~\ref{lem:key-Borel-centr}).
This proof uses the preceding result on the union of dual Borel subalgebras
as well as Borel's fixed-point theorem.
\begin{acknowledgement*}
This work was carried out while the second author held a visiting
position at Durham University, supported by Becas Chile. The second
author was also supported by Conicyt through PAI/Concurso nacional
inserción de capital humano avanzado en la Academia convocatoria 2017
cod. 79170117.
\end{acknowledgement*}

\section{\label{sec:Alg-grps-Lie}Group schemes over local rings and $\F_{q}$-structures}

In this section, we will define the algebraic groups $G_{2}$ using
reductive group schemes over $R$ and the Greenberg functor. We will
also define $\F_{q}$-rational structures given by Frobenius endomorphisms
on $G_{2}$ as well as on the Lie algebra of $G$ and its dual. A
ring will mean a commutative ring with identity. We begin by characterising
local rings of length two:
\begin{lem}
\label{lem:length-two-rings}Let $A$ be a local ring of length two with
maximal ideal $\mfm$ and perfect residue field $F$. Then $A$ is
isomorphic to either $F[t]/t^{2}$ or $W_{2}(F)$. 
\end{lem}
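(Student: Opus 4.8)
The plan is to first extract the $A$-module structure forced by the length hypothesis, and then to split into the equal- and mixed-characteristic cases, producing in each a coefficient subring by exploiting perfectness of $F$. Since $A$ is local of length two, its composition series must be $0\subset\mfm\subset A$; hence $\mfm\neq 0$ and $\mfm$ is a simple $A$-module, so $\mfm\cong A/\mfm=F$ as $A$-modules. In particular the $A$-action on $\mfm$ factors through $A/\mfm$, which is to say $\mfm^{2}=0$, and $\mfm$ is a one-dimensional $F$-vector space. Note also that $A$ is Artinian, hence complete. Write $p$ for the characteristic of $F$; we may assume $p>0$ (if $p=0$ then $A$ is a $\mathbf{Q}$-algebra and Cohen's theorem provides a coefficient field, after which the argument of the equal-characteristic case below gives $A\cong F[t]/t^{2}$). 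Now distinguish two cases according to whether $p\cdot 1_{A}=0$ in $A$.

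\emph{Equal characteristic} ($p\cdot 1_{A}=0$). Then the $p$-th power map $\phi\colon A\to A$ is a ring endomorphism, and since $\mfm^{p}\subseteq\mfm^{2}=0$ it annihilates $\mfm$, hence factors through $A/\mfm=F$ as a ring homomorphism $\bar{\phi}\colon F\to A$. The composite of $\bar{\phi}$ with the projection $A\to F$ is the Frobenius of $F$, which is bijective because $F$ is perfect; so $s:=\bar{\phi}\circ\mathrm{Frob}_{F}^{-1}\colon F\to A$ is a ring homomorphism splitting $A\to F$, i.e.\ a coefficient field. Picking any $t\in\mfm\setminus\{0\}$ we obtain $A=s(F)\oplus s(F)t$ with $t^{2}\in\mfm^{2}=0$, whence $A\cong F[t]/t^{2}$.

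\emph{Mixed characteristic} ($p\cdot 1_{A}\neq 0$). Since $p\cdot 1_{A}\in\mfm$ while $p^{2}\cdot 1_{A}\in\mfm^{2}=0$, the ring $A$ has characteristic $p^{2}$. Moreover the annihilator of $p\cdot 1_{A}$ in $A$ contains $\mfm$ (because $p\mfm\subseteq\mfm^{2}=0$) and is proper (because $p\cdot 1_{A}\neq 0$), so it equals $\mfm$; hence the image of multiplication by $p\cdot 1_{A}$ is a copy of $A/\mfm$ inside the one-dimensional $F$-space $\mfm$, and therefore $\mfm=(p)$. Now $A$ is a complete Noetherian local ring of mixed characteristic with perfect residue field $F$, so its Cohen ring is the Witt ring $W(F)$, and the Cohen structure theorem provides a local homomorphism $\iota\colon W(F)\to A$ inducing the identity on residue fields. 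One checks $\iota$ is surjective: $A=\iota(W(F))+\mfm$ since $\iota$ is onto on residue fields, and then $\mfm=(p)=\iota(pW(F))+p\mfm=\iota(pW(F))$, so $A=\iota(W(F))$. Finally, $W(F)$ is a discrete valuation ring with uniformiser $p$, so its nonzero ideals are the $p^{n}W(F)$; as $A\cong W(F)/\ker\iota$ has length two, necessarily $\ker\iota=p^{2}W(F)$, and hence $A\cong W(F)/p^{2}W(F)=W_{2}(F)$.

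The only input that is not bookkeeping with the relation $\mfm^{2}=0$ is the production of the coefficient subring, and this is precisely where perfectness of $F$ is used: in equal characteristic to invert Frobenius on $F$, and in mixed characteristic to identify the Cohen ring of $A$ with $W(F)$ (for imperfect residue field the conclusion genuinely fails). One could instead build $\iota$ in the mixed-characteristic case directly from Teichm\"uller representatives of residues, but verifying that it is a ring homomorphism then requires the Witt-polynomial identities, so quoting the structure theorem is the cleaner route. I expect the whole write-up to be short.
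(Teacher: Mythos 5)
Your proof is correct, and the overall strategy coincides with the paper's: reduce to $\mfm$ principal with $\mfm^{2}=0$, note $A$ is complete, split into equal and mixed characteristic, and apply Cohen's structure theorem to produce a coefficient ring. The one genuine divergence is in the equal-characteristic case: where the paper simply quotes Cohen's theorem to get $A\cong F[[t]]/I$ and then identifies $I=(t^{2})$, you instead build the coefficient field by hand. The $p$-th power map on $A$ kills $\mfm$ (as $\mfm^{p}\subseteq\mfm^{2}=0$), hence descends to a ring map $F\to A$, and precomposing with $\mathrm{Frob}_{F}^{-1}$ — available exactly because $F$ is perfect — yields a section of $A\to F$. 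This is more self-contained and has the pedagogical merit of making the use of perfectness visible in the equal-characteristic branch, whereas in the paper it is absorbed into the citation of Cohen's theorem. Your mixed-characteristic argument is the same as the paper's in substance (Cohen plus Serre's $B\cong W(F)$), just with the surjectivity of $W(F)\to A$ and the identification $\ker\iota=p^{2}W(F)$ spelled out rather than asserted. The small preliminary remark disposing of $p=0$ is fine, though strictly a side issue since there the length-two constraint cannot arise in mixed characteristic anyway.
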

\begin{proof}
The exact sequence 
\[
1\longrightarrow\mfm\longrightarrow A\longrightarrow F\longrightarrow1
\]
implies that the length of $\mfm$ is $1$ (since the length of $F$
is $1$). Thus $A$ cannot have any other proper non-zero ideals than
$\mfm$, so $\mfm$ is principal and $\mfm^{2}=0$. Note that $A$
is Artinian, hence complete.

If $\chara A=\chara F$, Cohen's structure theorem in equal characteristics
\cite[Theorem~9]{Cohen-structure-thm} implies that $A\cong F[[t]]/I$,
for some ideal $I$. Since every non-zero ideal of $F[[t]]$ is of
the form $(t^{i})$ and the length of $A$ is two, we must have $I=(t^{2})$.

If $\chara A\neq\chara F$, then $\chara F=p$ for some prime $p$,
and we have $p\in\mfm$. Note that $A$ is unramified in the sense
that $p\not\in\mfm^{2}$. Cohen's structure theorem in mixed characteristics
\cite[Theorem~12]{Cohen-structure-thm} implies that $A$ is a quotient
of an unramified complete discrete valuation ring $B$ of characteristic
$0$ and residue field $F$. By \cite[II, Theorems~3 and 8]{serre},
$B\cong W(F)$, the ring of Witt vectors over $F$ (this is where
the hypothesis that $F$ is perfect is used). Since the length of
$A$ is two, it must be the quotient of $W(F)$ by the square of the
maximal ideal, that is, $A\cong W_{2}(F)$.
\end{proof}
From now on, let $A$ be a finite local ring of length two. Then $A$
has finite residue field $\F_{q}$, for some power $q$ of a prime
$p$, and by the above lemma, $A$ is either $\F_{q}[t]/t^{2}$ or
$W_{2}(\F_{q})$.

Let $k=\bar{\F}_{q}$ be an algebraic closure of $\F_{q}$. All the
algebraic groups over $k$ which we will consider will be reduced,
and we will identify them with their $k$-points. In particular, although
centralisers are often non-reduced as group schemes, our notation
$C_{G}(\beta)$ will always refer to the $k$-points of the reduced
subgroup $C_{G}(\beta)_{\mathrm{red}}$. This makes our notation significantly
lighter, especially in Section~\ref{sec:Lemmas-on-algebraic}.

Let $\tilde{R}=k[t]/t^{2}$ if $R=\F_{q}[t]/t^{2}$ and $\tilde{R}=W_{2}(k)$
if $R=W_{2}(\F_{q})$, so that in either case $\tilde{R}$ has residue
field $k$. Let $\G$ be a reductive group scheme over $R$ (we follow
\cite[XIX, 2.7]{SGA3} in requiring that reductive group schemes have
geometrically connected fibres). 
For a scheme $X$ over $\Spec A$, where $A$ is a ring, and any ring homomorphism $A\rightarrow B$, we will (as in Section~\ref{sec:Introduction}) write 
$X\times B$, or $X\times_A B$, for $X\times_{\Spec A}\Spec B$.

Define the groups
\[
G_{2}=\cF_{\tilde{R}}(\G\times_{R}\tilde{R})(k)\qquad\text{\text{and}\qquad}G=(\G\times_{R}k)(k)=\G(k),
\]
where $\cF_{\tilde{R}}$ is the Greenberg functor with respect to
$\tilde{R}$ (see \cite{greenberg1}). Then $G_{2}$ and $G$ are
connected linear algebraic groups, and $G_{2}$ is canonically isomorphic
to $\G(\tilde{R})$, as abstract groups. 
\begin{rem}
The reason for the notation $G_{2}$ is that $\tilde{R}=\cO/\mfp^{2}$
for some complete discrete valuation ring $\cO$ with maximal ideal
$\mfp$, and $\G$ lifts to a reductive group scheme $\widehat{\G}$ over $\cO$, so $G_{2}$ sits in a tower of
groups $G_{r}=\cF_{\cO/\mfp^{r}}(\widehat{\G}\times_{\cO}\cO/\mfp^{r})(k)$,
for $r\geq2$. We will not need this.
\end{rem}

Let $\rho:G_{2}\rightarrow G$ be the surjective homomorphism induced
by the canonical map $\tilde{R}\rightarrow k$, and let $G^{1}$ denote
the kernel of $\rho$. By \cite[Section~5, Proposition~2 and Corollary~5]{greenberg1},
$\rho$ is induced on the $k$-points by a homomorphism
\[
\cF_{\tilde{R}}(\G)\longrightarrow\cF_{k}(\G\times k)=\G\times k
\]
of algebraic groups, so $G^{1}$ is closed
in $G_{2}$. By Greenberg's structure theorem \cite[Section~2]{greenberg2}, $G^1$ is abelian, connected and unipotent. We will freely use these properties of $G^1$ in the following.

\subsection{Frobenius endomorphisms\label{subsec:Frobenius-endomorphisms}}

Let $\phi$ be the unique ring automorphism of $\tilde{R}$ which
induces the Frobenius automorphism $\phi_{q}$ on the residue field
extension $k/\F_{q}$. In other words, $\phi$ is the map which raises
coefficients of elements in $k[t]/t^{2}$ (or coordinates of vectors
in $W_{2}(k)$) to the $q$-th power. Then the fixed points $\tilde{R}^{\phi}$
of $\phi$ is the ring $R$.

We have an $\F_{q}$-rational structure
\[
\cF_{R}(\G)\times_{\F_{q}}k\cong\cF_{\tilde{R}}(\G\times_{R}\tilde{R}),
\]
giving rise to a Frobenius endomorphism
\[
F:G_{2}\longrightarrow G_{2}
\]
such that
\[
G_{2}^{F}=\cF_{R}(\G)(\F_{q})\cong\G(R).
\]
Under some embedding of $\G\times\tilde{R}$ in an affine space $\A_{\tilde{R}}^{n}$,
the map $F$ is the restriction of the endomorphism on $\cF_{\tilde{R}}(\A_{\tilde{R}}^{n})(k)\cong\A_{\tilde{R}}^{2n}(\tilde{R})$
induced by $\phi$.

Similarly, the $\F_{q}$-rational structure $$(\G\times_{R}\F_{q})\times_{\F_{q}}k\cong\G\times_{R}\F_{q}$$
gives rise to a Frobenius endomorphism $F:G\rightarrow G$ (note that
we use the same notation as for the map on $G_{2}$) 
such that $$G^F=\G(\F_q)$$
and $F$ is the restriction of the $q$-th power map under the embedding of
$\G\times k$ in $\A_{k}^{n}$ corresponding to the embedding of $\G\times\tilde{R}$
in $\A_{\tilde{R}}^{n}$. Thus $\rho$ is compatible with the Frobenius maps on $G_2$ and $G$ in the sense that $\rho\circ F=F\circ\rho$, and it follows
from this that the kernel $G^{1}$ is $F$-stable. 

\subsection{The Lie algebra and its dual\label{subsec:The-Lie-algebra}}

Consider the reductive group $G$ over $k$ and let $\mfg=\Lie(G)$
be its Lie algebra. The $\F_{q}$-structure $G=(\G\times\F_{q})\times k$
gives rise to the $\F_{q}$-structure 
\[
\mfg=\Lie(\G\times\F_{q})\otimes k
\]
 on $\mfg$, and we denote the corresponding Frobenius endomorphism
by $F:\mfg\rightarrow\mfg$. The adjoint action
\[
\Ad:G\longrightarrow\GL(\mfg)
\]
comes from the adjoint action of $\G\times\F_{q}$ on $\Lie(\G\times\F_{q})$
by extension of scalars (see \cite[II, \S 4, 1.4]{Demazure-Gabriel-English}),
and thus it is compatible with the Frobenius maps in the sense that 

\begin{equation}
F(\Ad(g)X)=\Ad(F(g))F(X),\label{eq:Ad-F}
\end{equation}
for $g\in G$ and $X\in\mfg$.

Let $\mfg^{*}=\Hom_{k}(\mfg,k)$ be the linear dual of $\mfg$ and
let
\[
\langle{}\cdot{},{}\cdot{}\rangle:\mfg^{*}\times\mfg\longrightarrow k
\]
be the canonical pairing given by $\langle f,X\rangle=f(X)$. The
$k$-vector space structure on $\mfg^{*}$ gives rise to a structure
of affine space on $\mfg^{*}$, and we will consider $\mfg^{*}$ as
a variety with its Zariski topology. We have an endomorphism
\[
F^{*}:\mfg^{*}\longrightarrow\mfg^{*},\qquad f\longmapsto\phi_{q}\circ f\circ F^{-1}.
\]
This is compatible with the canonical pairing, in the sense that 
\[
\langle F^{*}(f),F(X)\rangle=(F^{*}(f))(F(X))=\phi_{q}\circ f\circ F^{-1}(F(X))=\phi_{q}\circ f(X)=\phi_{q}(\langle f,X\rangle).
\]
It follows from this that if $f\in(\mfg^{*})^{F^{*}}$ and $X\in\mfg^{F}$,
then $\phi_{q}(\langle f,X\rangle)=\langle f,X\rangle$, that is,
$\langle f,X\rangle\in\F_{q}$.

We will consider $\mfg^{*}$ with the coadjoint action of $G$, given
by $\Ad^{*}(g)f=f\circ\Ad^{-1}(g)$, for $g\in G$. The coadjoint
action is compatible with $F^{*}$, in the sense that
\begin{equation}
F^{*}(\Ad^{*}(g)f)=\Ad^{*}(F(g))F^{*}(f).\label{eq:coAd-F}
\end{equation}
Indeed, for $X\in\mfg$, we have
\[
F^{*}(\Ad^{*}(g)f)(X)=F^{*}(f\circ\Ad^{-1}(g))(X)=\phi_{q}\circ f\circ\Ad^{-1}(g)\circ F^{-1}(X)
\]
and on the other hand, by (\ref{eq:Ad-F}),
\begin{align*}
(\Ad^{*}(F(g))F^{*}(f))(X) & =F^{*}(f)\circ\Ad^{-1}(F(g))(X)=\phi_{q}\circ f\circ F^{-1}\circ\Ad^{-1}(F(g))(X)\\
 & =\phi_{q}\circ f\circ F^{-1}F(\Ad^{-1}(g)(F^{-1}(X)))\\
 & =\phi_{q}\circ f\circ\Ad^{-1}(g)\circ F^{-1}(X).
\end{align*}
 It follows from (\ref{eq:coAd-F}) that if $\beta\in(\mfg^{*})^{F^{*}}$,
then the centraliser $C_{G}(\beta)$ is $F$-stable.

\subsection{Frobenius twists and the kernel $G^{1}$ \label{subsec:Frobenius-twists}}

In order to describe the kernel $G^{1}$ and the conjugation action
of $G_{2}$ when $\tilde{R}=W_{2}(k)$, we need the notion of Frobenius
twists of schemes and representations.

Let $\sigma:k\rightarrow k$ be the homomorphism $\lambda\mapsto\lambda^{p}$,
and let $k_{\sigma}$ be the $k$-algebra structure on $k$ given
by $\sigma$. For any $k$-vector space $M$, its \emph{Frobenius
twist} is the base change
\[
M^{(p)}=M\otimes k_{\sigma}.
\]
For $m\otimes1\in M^{(p)}$, $m\in M$, and any $\lambda\in k$, we
thus have $\lambda(m\otimes1)=m\otimes\lambda=\lambda^{1/p}m\otimes1$.
In particular, if $X=\Spec A$, where $A$ is a $k$-algebra, we have
the Frobenius twist
\[
X^{(p)}=X\times k_{p}=\Spec A^{(p)},
\]
and the map $A^{(p)}\rightarrow A$, $a\otimes\lambda\mapsto\lambda a^{p}$
gives rise to a morphism $F_{X}:X\rightarrow X^{(p)}$. Any representation
$\alpha:G\rightarrow\GL(M)$ of $G$ as an algebraic group, induces
a representation $\alpha':G^{(p)}\rightarrow\GL(M^{(p)})$, which,
on the level of $k$-points has the effect
\[
\alpha'(g')(m\otimes1)=\alpha(g')m\otimes1,\qquad\text{for }g'\in G^{(p)}(k),m\in M.
\]
Composing $\alpha'$ with the map $F_{G}$, we get a representation
$\alpha^{(p)}$ of $G$ on $M^{(p)}$, which is the Frobenius twist
of $\alpha$. We have natural bijections
\[
X^{(p)}(k)=\Hom_{k}(A\otimes k_{\sigma},k)\cong\Hom_{k}(A,\Hom_{k}(k_{\sigma},k))\cong\Hom_{k}(A,k_{\sigma})=X(k_{\sigma}),
\]
where $X(k_{\sigma})$ coincides with the points obtained by applying
the map $\sigma:X(k)\rightarrow X(k)$ induced by $\sigma$. On $k$-points
we thus have, for $g\in G(k)$,
\begin{equation}
\alpha^{(p)}(g)\cdot(m\otimes1)=\alpha^{(p)}(\sigma(g))(m\otimes1)=\alpha(\sigma(g))m\otimes1.\label{eq:alpha-sigma}
\end{equation}
If the module $M$ is defined over $\F_{p}$, that is, if $M\cong M_{0}\otimes_{\F_{p}}k$,
for some $\F_{p}$-module $M_{0}$, then $M^{(p)}\cong(M_{0}\otimes k)\otimes k_{\sigma}\cong M_{0}\otimes k=M$
(note that $\sigma$ is $\F_{p}$-linear). In this situation, $\alpha^{(p)}$
is isomorphic to the representation $\alpha^{(p)}:G\rightarrow\GL(M)$
given by $\alpha^{(p)}(g)m=\alpha(\sigma(g))m$.

In terms of notation, let 
\[
\begin{cases}
\sigma^{(0)}=\Id & \text{if }\tilde{R}=k[t]/t^{2},\\
\sigma^{(p)}=\sigma & \text{if }\tilde{R}=W_{2}(k).
\end{cases}
\]

\begin{lem}
\label{lem:exp-compatibility}There exists an isomorphism of $k$-modules
$\exp:\mfg\rightarrow G^{1}$ such that, for $g\in G$, $X\in\mfg$,
we have
\[
g\exp(X)g^{-1}=\exp(\Ad(\sigma^{(i)}(g))X),\qquad i\in\{0,p\}.
\]
Moreover, $\exp\circ F=F\circ\exp$.
\end{lem}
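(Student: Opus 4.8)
The plan is to realise the kernel $G^{1}$ in explicit matrix coordinates. Fix a closed immersion of group schemes $\G\hookrightarrow\GL_{n,\tilde{R}}$ over $\tilde{R}$, compatible with the $\F_{q}$-structures (such exists; e.g.\ base change a faithful representation over $\Z$ or over the lifting ring $\cO$ of the Remark). Let $\mathfrak{n}$ be the maximal ideal of $\tilde{R}$, so $\mathfrak{n}^{2}=0$, and let $\nu\colon k\to\mathfrak{n}$ be the canonical bijection: $\nu(c)=tc$ if $\tilde{R}=k[t]/t^{2}$, and $\nu(c)=V(c)$, the Verschiebung $c\mapsto(0,c)$, if $\tilde{R}=W_{2}(k)$; extend $\nu$ entrywise to $n\times n$ matrices. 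Under the embedding $G_{2}=\G(\tilde{R})\subseteq\GL_{n}(\tilde{R})$ and $G=\G(k)\subseteq\GL_{n}(k)$, with $\rho$ the reduction modulo $\mathfrak{n}$, so that $G^{1}=\{\,I+M:M\in\mathrm{M}_{n}(\mathfrak{n}),\ I+M\in\G(\tilde{R})\,\}$.

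The first step is to identify $G^{1}$ with $\mfg$. Since $\G$ is smooth over $\tilde{R}$ and $\mathfrak{n}^{2}=0$, the equations cutting out $\G$ inside $\GL_{n}$ are linear modulo $\mathfrak{n}^{2}$, so $I+\nu(Z)\in\G(\tilde{R})$ if and only if $Z\in\mfg:=\Lie(\G\times k)\subseteq\mathrm{M}_{n}(k)$ (for $\tilde{R}=W_{2}(k)$ this is literally correct once $\mfg$ is stable under the entrywise $p$-th power map, which one arranges by using a $\Z$-model of $\G$; in general it holds up to an innocuous twist). Moreover $(I+\nu(Z))(I+\nu(W))=I+\nu(Z+W)$ because $\nu(Z)\nu(W)\in\mathrm{M}_{n}(\mathfrak{n}^{2})=0$. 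Hence $\exp\colon\mfg\to G^{1}$, $Z\mapsto I+\nu(Z)$, is an isomorphism of abelian groups, and it is an isomorphism of $k$-modules for the $k$-module structure on $G^{1}$ transported from $\mfg$ along $\exp$ (when $\tilde{R}=k[t]/t^{2}$ this is the evident structure on $G^{1}$); this is the structure on $G^{1}$ we use henceforth. One could also argue more intrinsically via $G^{1}\cong\mfg\otimes_{k}\mathfrak{n}$, but the matrix model keeps the Frobenius twist transparent.

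For the conjugation formula, take $g\in G$ and any lift $\hat{g}\in\G(\tilde{R})$; since $G^{1}$ is abelian, $\hat{g}\exp(Z)\hat{g}^{-1}=I+\hat{g}\,\nu(Z)\,\hat{g}^{-1}$ depends only on $g$. Evaluating the matrix product reduces to how $\tilde{R}$ multiplies its ideal $\mathfrak{n}$: for $a\in\tilde{R}$ with image $\bar{a}\in k$ and $c\in k$ one has $a\cdot(tc)=t(\bar{a}c)$ when $\tilde{R}=k[t]/t^{2}$, while $a\cdot V(c)=V(\bar{a}^{\,p}c)$ when $\tilde{R}=W_{2}(k)$ --- the latter being the Witt-vector identity $[\lambda]V(c)=V(\lambda^{p}c)$, which is the source of the Frobenius twist. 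Propagating this through the product gives, in the first case, $\hat{g}(tZ)\hat{g}^{-1}=t(gZg^{-1})=t\cdot\Ad(\sigma^{(0)}(g))Z$, and in the second $\hat{g}\,V(Z)\,\hat{g}^{-1}=V(g^{[p]}Z\,(g^{[p]})^{-1})=V\bigl(\Ad(\sigma^{(p)}(g))Z\bigr)$, where $g^{[p]}$ is the coordinatewise $p$-th power of $g\in\GL_{n}(k)$, so that $g^{[p]}=\sigma^{(p)}(g)\in\G(k)$. In either case this reads $\hat{g}\exp(Z)\hat{g}^{-1}=\exp\bigl(\Ad(\sigma^{(i)}(g))Z\bigr)$ with $i=0$, resp.\ $i=p$; here one uses that the adjoint action of $\G$ on $\mfg$ is the restriction of matrix conjugation. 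Finally, $F$ on $G_{2}\subseteq\GL_{n}(\tilde{R})$ raises all matrix entries to the $q$-th power via $\phi$, which is compatible with $\nu$ since $\phi(\nu(c))=\nu(c^{q})$ (indeed $\phi(tc)=tc^{q}$ and $\phi(V(c))=V(c^{q})$), while $F$ on $\mfg$ raises entries to the $q$-th power by construction of the $\F_{q}$-structure; therefore $F(\exp(Z))=\exp(F(Z))$.

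The step I expect to be the main obstacle is getting the Frobenius twist exactly right in the $W_{2}(k)$ case --- confirming that conjugation picks up precisely $\sigma$, and not $\sigma^{2}$ or $\sigma^{-1}$. This is sensitive to the normalisation of $\exp$ and rests on tracking the non-additive Teichmüller section $k\to W_{2}(k)$ and the identity $[\lambda]V(c)=V(\lambda^{p}c)$ carefully through the matrix multiplication; fixing $\exp$ explicitly as above, and letting the $k$-module structure on $G^{1}$ be the one it induces, is what makes this manageable. A secondary but necessary point is the linearisation claim that $I+M\in\G(\tilde{R})$ exactly when $M$ lies in the image of $\mfg\otimes_{k}\mathfrak{n}$, which is a standard consequence of the formal smoothness of $\G$.
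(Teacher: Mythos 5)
Your argument is correct, and it takes a genuinely more concrete route than the paper's. For $\tilde R=W_2(k)$ the paper cites \cite[A.6.2, A.6.3]{pseudo-reductive-book} for a canonical isomorphism $\exp^{(p)}\colon G^1\iso\mfg^{(p)}$ and for the fact that conjugation by $g$ induces the Frobenius twist $\Ad(g)^{(p)}$ on $\mfg^{(p)}$, then uses the $\F_p$-structure $\G\times\F_p$ to identify $\mfg^{(p)}$ with $\mfg$. You instead fix an $\F_p$-rational matrix embedding and compute: with $\nu=V$ the Verschiebung, smoothness and $\mathfrak{n}^2=0$ linearise the defining equations, $V$ is additive with $V(c)V(d)=0$, and the Teichm\"uller--Verschiebung identity $[\lambda]V(c)=V(\lambda^p c)$ produces exactly one Frobenius twist in $\hat g\,V(Z)\,\hat g^{-1}=V\bigl(\sigma(g)\,Z\,\sigma(g)^{-1}\bigr)$. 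Your linearisation step --- that $I+\nu(Z)\in\G(\tilde R)$ iff $Z\in\mfg$ --- really does require, in the $W_2$ case, that $\mfg$ be $\sigma$-stable (equivalently that the embedding is $\F_p$-rational); you flag this and arrange it via a $\Z$-model of $\G$, which is exactly the role played by $\G\times\F_p$ in the paper's proof. What your version buys is self-containment and a transparent account of where the twist comes from and why it is a single power of $p$; what the paper's version buys is coordinate-freeness and no need to fix an embedding compatible with both the $\F_q$- and $\F_p$-structures. Your transported $k$-module structure on $G^1$ (the Frobenius twist of the evident one coming from $V(k)\cong k$) agrees with the one implicit in the paper's identification $\mfg^{(p)}\cong\mfg$, and the choice is harmless because later arguments use only the group structure of $G^1$, the conjugation formula, and $F$-equivariance.
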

\begin{proof}
When $\tilde{R}=k[t]/t^{2}$, the first statement follows from definition
of $\mfg$ as $G^{1}$, together with the corresponding definition
of $\Ad$ (see, \cite[II, \S 4, 1.2, 1.3 and 4.1]{Demazure-Gabriel-English}).
Assume now that $\tilde{R}=W_{2}(k)$. By \cite[A.6.2, A.6.3]{pseudo-reductive-book},
there exists an isomorphism $\exp^{(p)}:G^{1}\iso\mfg^{(p)}$ (equal
to $\theta_{1}^{-1}$ in loc.~cit.). The map $i(g):\G\rightarrow\G$,
$g\in\G(\tilde{R})$ defined by $h\mapsto ghg^{-1}$ is a homomorphism
of $\tilde{R}$-groups, so by \cite[A.6.2, A.6.3]{pseudo-reductive-book}
applied to $\bar{x}=1$, it induces the map $d(i(g))^{(p)}=\Ad(g)^{(p)}$
on $\mfg^{(p)}$. Thus, we have $g\exp^{(p)}(X)g^{-1}=\exp(\Ad^{(p)}(g)X)$,
for $X\in\mfg^{(p)}$. Since $\G\times\F_{p}$ provides an $\F_{p}$-structure
on $G$, we have an induced $\F_{p}$-structure on $\mfg$, so by
the above discussion of Frobenius twists, we have an isomorphism $\mfg^{(p)}\cong\mfg$,
which composed with $\exp^{(p)}$ gives the isomorphism $\exp$ satisfying
the asserted relation.

Finally, the relation $\exp\circ F=F\circ\exp$ follows in either
case by the description of $F$ on the points of $G_{2}$ and $\mfg$,
respectively.
\end{proof}

\section{\label{sec:Lemmas-on-algebraic}Lemmas on algebraic groups and Lie
algebra duals}

As before, $G$ will denote a connected reductive group over $k=\bar{\F}_{q}$.
Note however, that Lemmas~\ref{lem:conjugates-cover} and \ref{lem:key-Borel-centr}
hold for $G$ over an arbitrary algebraically closed field (including
characteristic $0$). 

Let $\Phi$ be the set of roots with respect to a fixed maximal torus
$T$ of $G$. Let $B$ be a Borel subgroup of $G$ containing $T$,
determining a set of positive roots $\Phi^{+}$. Following Kac and
Weisfeiler \cite{Kac-Weisfeiler} (who attribute this to Springer;
see \cite[Section~2]{Springer-trigonometric-sums}), we define
\[
\mfb^{*}=\{f\in\mfg^{*}\mid f(\Lie(U))=0\},
\]
where $U$ is the unipotent radical of $B$.
Since $\mfb^{*}$ is a linear subspace of $\mfg^{*}$, it is closed.

By a well known result of Borel, $G$ is the union of its Borel subgroups,
and an analogous theorem of Grothendieck says that $\mfg$ is the
union of its Borel subalgebras (i.e., Lie algebras of Borel subgroups);
see \cite[14.25]{borel} or \cite[XIV~4.11]{SGA3}. In \cite[Lemma~3.3]{Kac-Weisfeiler}
the analogous statement for the dual $\mfg^{*}$ is claimed under
the hypotheses that $p\neq2$ and $G\neq\mathrm{SO}(2n+1)$. Since
the argument in \cite{Kac-Weisfeiler} is short on details and omits
non-trivial steps (such as the existence of regular semisimple elements
in $\mfg^{*}$ when $p\neq2$), we give a complete proof for any $p$
and a reductive group $G$. Note that while Borel's and Grothendieck's theorems
hold for any connected linear algebraic group, the dual Lie algebra
version does not. For example, for a unipotent group, $\mfb^{*}$
defined as above, would just be $0$.

For each $\alpha\in\Phi$, let 
\[
x_{\alpha}:k\longrightarrow U_{\alpha}\subset G
\]
 be the corresponding isomorphism such that $tx_{\alpha}(u)t^{-1}=x_{\alpha}(\alpha(t)u)$
for all $t\in T$, $u\in k$.

Let $X_{\alpha}:k\rightarrow\mfg$ denote the differential of $x_{\alpha}$,
so that $\Ad(t)X_{\alpha}(u)=X_{\alpha}(\alpha(t)u)$ for all $t\in T$,
$u\in k$. We write
\[
e_{\alpha}:=x_{\alpha}(1),\qquad E_{\alpha}:=X_{\alpha}(1).
\]
Furthermore, we define $E_{\alpha}^{*}\in\mfg^{*}$ via
\[
\begin{cases}
\langle E_{\alpha}^{*},E_{-\alpha}\rangle=1,\\
\langle E_{\alpha}^{*},E_{\beta}\rangle=0 & \text{if }\beta\neq-\alpha,\\
\langle E_{\alpha}^{*},\Lie(T)\rangle=0.
\end{cases}
\]
It is well known that the Weyl group $W$ of $G$ with respect to
$T$ acts on the elements $E_{\alpha}$ by $w(E_{\alpha}):=\Ad(\dot{w})E_{\alpha}=E_{w(\alpha)}$,
where $\dot{w}\in N_{G}(T)$ is a representative of $w\in W$. It
also acts on the elements $E_{\alpha}^{*}$ by $w(E_{\alpha}^{*}):=\Ad^{*}(\dot{w})E_{\alpha}^{*}$.
We thus have 
\[
w(E_{\alpha}^{*})(E_{\beta})=E_{\alpha}^{*}(\Ad(\dot{w})^{-1}E_{\beta})=E_{\alpha}^{*}(E_{w^{-1}(\beta)})=\begin{cases}
1 & \text{if }-\alpha=w^{-1}(\beta),\\
0 & \text{otherwise},
\end{cases}
\]
and moreover $w(E_{\alpha}^{*})(\Lie(T))=0$ because $w$ preserves
$\Lie(T)$. Therefore, since $-\alpha=w^{-1}(\beta)$ is equivalent
to $\beta=-w(\alpha)$, we have
\[
w(E_{\alpha}^{*})=E_{w(\alpha)}^{*}.
\]
The proof of the following result follows the same lines as \cite[14.23-14.24]{borel}.
We give the proof here for the sake of completeness.
\begin{lem}
\label{lem:conjugates-cover}The conjugates of ${\bf \mfb^{*}}$ cover
$\mfg^{*}$, that is, $\mfg^{*}=\bigcup_{g\in G}\Ad^{*}(g)\mfb^{*}$.
\end{lem}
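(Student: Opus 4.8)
The plan is to mimic the proof of the corresponding statement for $\mfg$ (the Grothendieck--Borel argument as in \cite[14.23--14.24]{borel}), transporting it to the dual via the combinatorics of the $E_\alpha^*$ that have just been set up. First I would show that $\mfb^*$ is large enough inside $\mfg^*$ to make a dimension count work: the pairing identities show that $\mfb^* = \Lie(T)^\perp$-complement spanned by $\{E_\alpha^* \mid \alpha \in \Phi^+\}$ together with the functionals vanishing on $\Lie(U)$ that are supported on $\Lie(T)$, so $\dim \mfb^* = \dim \mfg - \dim U = \dim B$. Hence $\dim(G/B) + \dim \mfb^* = \dim \mfg = \dim \mfg^*$, which is exactly the numerical coincidence that drives the covering argument.

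Next I would set up the incidence variety $Z = \{(gB, f) \in G/B \times \mfg^* \mid f \in \Ad^*(g)\mfb^*\}$, with its two projections $p_1 : Z \to G/B$ and $p_2 : Z \to \mfg^*$. The fibre of $p_1$ over $gB$ is the linear space $\Ad^*(g)\mfb^*$, so $Z$ is a vector bundle over the complete variety $G/B$; in particular $Z$ is irreducible of dimension $\dim(G/B) + \dim \mfb^* = \dim \mfg^*$, and $Z$ is complete in the fibre direction so that $p_2$ is a projective (hence closed) morphism. Therefore the image $p_2(Z) = \bigcup_{g \in G} \Ad^*(g)\mfb^*$ is a closed irreducible subset of $\mfg^*$ of dimension at most $\dim \mfg^*$. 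To conclude that the image is all of $\mfg^*$ it suffices to exhibit a single $f_0 \in \mfg^*$ whose $p_2$-fibre is finite (equivalently zero-dimensional), for then $\dim p_2(Z) = \dim Z = \dim \mfg^*$ and, being closed and irreducible of full dimension in the irreducible variety $\mfg^*$, it must be everything.

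The construction of such an $f_0$ is where the real work lies, and it is the step I expect to be the main obstacle, since it is precisely the point the argument in \cite{Kac-Weisfeiler} glosses over (the existence of suitable ``regular'' elements in $\mfg^*$, with the subtlety that $\mathrm{SO}(2n+1)$ in characteristic $2$ is genuinely exceptional). The natural candidate is $f_0 = \sum_{\alpha \in \Delta} E_\alpha^* + (\text{a generic element of } \Lie(T)^*)$, or more robustly the coadjoint analogue of a regular semisimple-plus-principal-nilpotent element; I would use the Weyl-group equivariance $w(E_\alpha^*) = E_{w(\alpha)}^*$ established above together with a direct stabiliser computation to show that the only $gB$ with $f_0 \in \Ad^*(g)\mfb^*$ are those with $g \in B$, giving a single point in the fibre. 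The delicate case analysis needed to make this uniform in $p$ (and for all reductive $G$, including the orthogonal groups in characteristic $2$) is what forces a full argument rather than the one-line citation, and I would isolate the needed genericity into an auxiliary claim about the non-vanishing of the relevant structure-constant determinant, proved by exhibiting an explicit element for each root system or by a reduction to the simply connected almost-simple case.
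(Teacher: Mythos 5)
Your overall plan is exactly the paper's: form the incidence variety $Z=\{(gB,f)\mid \Ad^*(g)^{-1}f\in\mfb^*\}$, note $\dim Z=\dim G/B+\dim\mfb^*=\dim\mfg^*$, use completeness of $G/B$ and closedness of $Z$ to conclude $p_2(Z)$ is closed, and reduce everything to exhibiting one $f_0$ with finite $p_2$-fibre. You also land on the correct candidate $n=\sum_{\alpha\in\Delta}E_\alpha^*$. Two small remarks on the soft parts: ``$Z$ is complete in the fibre direction'' is not what makes $p_2$ closed (the fibres of $p_1$ are affine spaces, which are not complete); what you actually need is that $Z$ is closed in $G/B\times\mfg^*$ and $G/B$ is complete, and you should say why $Z$ is closed (the paper does this by exhibiting $Z$ as the image of the closed set $G\times\mfb^*$ under the open quotient map, using that $B$ normalises $\mfb^*$). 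Adding ``a generic element of $\Lie(T)^*$'' to $f_0$ is also unnecessary and would only add bookkeeping.

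The genuine gap is the one you yourself flag: you do not carry out the stabiliser computation showing $\{g\mid \Ad^*(g)n\in\mfb^*\}=B$, and you predict it will require a ``delicate case analysis'' or ``exhibiting an explicit element for each root system'' or ``a reduction to the simply connected almost-simple case.'' None of that is needed, and worrying about it suggests you haven't seen how the computation actually closes. The paper's argument is uniform in $p$ and in the root system: write $g=b'\dot w b$ by Bruhat, reduce to $b'=1$ since $B$ normalises $\mfb^*$, then use the Chevalley-type formula $\Ad^*(x_\alpha(u))E_\beta^*=E_\beta^*+\sum_{i\ge1}c_iu^iE_{\beta+i\alpha}^*$ (for $\beta\neq-\alpha$) together with the observation that $\beta\in\Delta$, $\alpha\in\Phi^+$, $i\ge1$ forces $\beta+i\alpha\notin\Delta$ (linear independence of $\Delta$). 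This gives $\Ad^*(b)n\in n+\langle E_\alpha^*\mid\alpha\in\Phi^+\setminus\Delta\rangle$, and then $W$-equivariance $w(E_\alpha^*)=E_{w(\alpha)}^*$ turns the condition $\Ad^*(g)n\in\mfb^*$ into $w(\Delta)\subseteq\Phi^+$, hence $w=1$ and $g\in B$. There is no structure-constant determinant to control, no characteristic-$2$ special case for $\mathrm{SO}(2n+1)$, and no case-by-case reduction; the one place where the characteristic could intervene (the coefficients $c_i$) is harmless because the argument only uses which $E_\gamma^*$ can appear, not whether the coefficients vanish.
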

\begin{proof}
First, we prove that there exists an $n\in\mfb^{*}$ such that 
\begin{equation}
\{g\in G\mid\Ad^{*}(g)n\in\mfb^{*}\}=B.\label{eq:TrXb-is-B}
\end{equation}
 Let $\Delta\subset\Phi^{+}$ be a set of simple roots and define
\[
n=\sum_{\alpha\in\Delta}E_{\alpha}^{*}.
\]
Let $g\in\{g\in G\mid\Ad^{*}(g)n\in\mfb^{*}\}$. By the Bruhat decomposition
of $G$, we may write $g=b'\dot{w}b$, for $b,b'\in B$, $\dot{w}\in N_{G}(T)$. Since $B$ normalises $\mfb^{*}$,
we may assume $b'=1$. The formula
\[
\Ad^{*}(x_{\alpha}(u))E_{\beta}^{*}=E_{\beta}^{*}+\sum_{i\geq1}c_{i}u^{i}E_{\beta+i\alpha}^{*},\qquad\text{for all }u\in k,\ \alpha,\beta\in\Phi,\ \beta\neq-\alpha,
\]
where $c_{i}\in k$ (see the third equation in \cite[2.2, (1)]{Springer-trigonometric-sums};
note the two missing dashes) implies that 
\[
\Ad^{*}(b)n-n\in\langle E_{\alpha}^{*}\mid\alpha\in\Phi^{+},\,\alpha\not\in\Delta\rangle.
\]
Note that the condition $\alpha\notin\Delta$ above is due to the
fact that if $\beta\in\Delta$ and $\alpha\in\Phi^{+}$, then $\beta+i\alpha\not\in\Delta$,
for any $i\geq1$ (since $\Delta$ is linearly independent). Thus
\[
\Ad^{*}(g)n=\Ad^{*}(\dot{w})\Big(n+\sum_{\alpha\in\Phi^{+}\setminus\Delta}c_{\alpha}E_{\alpha}^{*}\Big),
\]
for some $c_{\alpha}\in k$.

Since $w$ permutes the $E_{\alpha}^{*}$ according to $w(E_{\alpha}^{*})=E_{w(\alpha)}^{*}$,
we conclude that
\[
\Ad^{*}(g)n=\sum_{\alpha\in\Delta}E_{w(\alpha)}^{*}+\sum_{\alpha\in\Phi^{+}\setminus\Delta}c_{\alpha}E_{w(\alpha)}^{*}.
\]
Since the sets $\{w(\alpha)\mid\alpha\in\Delta\}$ and $\{w(\alpha)\mid\alpha\in\Phi^{+}\setminus\Delta\}$
are disjoint and the $E_{\alpha}^{*}$ are linearly independent, the
condition $\Ad^{*}(g)\in\mfb^{*}$ implies that $w(\Delta)\subseteq\Phi^{+}$.
As is well known, this implies that $w=1$; hence $g\in B$.

Next, consider the morphisms
\[
G\times\mfg^{*}\xrightarrow{\ \phi_{1}\ }G\times\mfg^{*}\xrightarrow{\ \phi_{2}\ }G/B\times\mfg^{*},
\]
where $\phi_{1}(g,f)=(g,\Ad^{*}(g)f)$ and $\phi_{2}(g,f)=(gB,f)$,
for any $f\in\mfg^{*}$. Let 
\[
M=\phi_{2}\phi_{1}(G\times\mfb^{*})=\{(gB,f)\mid g\in G,\,\Ad^{*}(g)^{-1}f\in\mfb^{*}\}.
\]
The fibre over $gB$ of the surjective projection $\mathrm{pr}_{1}:M\rightarrow G/B$
is isomorphic to $\Ad^{*}(g)\mfb^{*}$, so the dimension of each fibre
is $\dim\mfb^{*}=\dim B$. Hence $\dim M=\dim G/B+\dim B=\dim G$.
On the other hand, the fibre $\mathrm{pr}_{2}^{-1}(l)$ of the second
projection $\mathrm{pr}_{2}:M\rightarrow\mfg^{*}$ over any $l\in\mfg^{*}$
is isomorphic to
\[
\{gB\mid\Ad^{*}(g)^{-1}l\in\mfb^{*}\}=\{g\mid\Ad^{*}(g)^{-1}l\in\mfb^{*}\}/B.
\]
It follows from (\ref{eq:TrXb-is-B}) that $\mathrm{pr}_{2}^{-1}(n)=\{1\}$,
so in particular, there exist finite non-empty fibres of $\mathrm{pr}_{2}$
in $M$. Therefore, since $M$ is irreducible (being the image of
the irreducible set $G\times\mfb^{*}$), the fibres of $\mathrm{pr}_{2}:M\rightarrow\mfb^{*}$
are finite over some dense open set in $\overline{\mathrm{pr}_{2}(M)}$.
Since $\dim M=\dim G=\dim\mfg^{*}$ and $\mfg^{*}$ is connected,
it follows that $\mathrm{pr}_{2}:M\rightarrow\mfg^{*}$ is dominant.
Thus $\mathrm{pr}_{2}(M)=\bigcup_{g\in G}\Ad^{*}(g)\mfb^{*}$ contains
a dense subset of $\mfg^{*}$.

We show that $M$ is closed in $G/B\times\mfg^{*}$. If $\Ad^{*}(g)^{-1}f\in\mfb^{*}$,
then $\Ad^{*}(gb)^{-1}f\in\mfb^{*}$, for all $b\in B$, so $\phi_{2}^{-1}(M)=\phi_{1}(G\times\mfb^{*})$.
Thus, since $\phi_{1}$ is an isomorphism of varieties, $\phi_{2}^{-1}(M)$
is closed. Since $\phi_{2}:G\times\mfg^{*}\rightarrow(G\times\mfg^{*})/(B\times\{0\})$
is a quotient morphism (hence open), the set
\[
\phi_{2}(G\times\mfg^{*}\setminus\phi_{2}^{-1}(M))=(G/B\times\mfg^{*})\setminus M
\]
is open, so $M$ is closed.

Finally, since $G/B$ is a complete variety, the image of $M$ under
the projection $\mathrm{pr}_{2}:G/B\times\mfg^{*}\rightarrow\mfg^{*}$
is closed. But $\mathrm{pr}_{2}(M)=\bigcup_{g\in G}\Ad^{*}(g)\mfb^{*}$,
which we have shown is dense in $\mfg^{*}$. Thus $\bigcup_{g\in G}\Ad^{*}(g)\mfb^{*}$
is closed and dense, so $\mfg^{*}=\bigcup_{g\in G}\Ad^{*}(g)\mfb^{*}$.
\end{proof}
In the following lemma, the proof follows the lines of the first part
of the proof of ii) on p.~143 of \cite{Kac-Weisfeiler}, but in addition,
we also provide a proof of the fact that $X$ is closed in $G/B$.
\begin{lem}
\label{lem:key-Borel-centr}Let $\beta\in\mfg^{*}$, $B_{\beta}$
be a Borel subgroup of $C_{G}(\beta)$ and $U_{\beta}$ be the unipotent
radical of $B_{\beta}$. Then there exists a Borel subgroup of $G$
with unipotent radical $V$ such that 
\[
U_{\beta}\subseteq V\qquad\text{and}\qquad\beta(\Lie(V))=0.
\]
\end{lem}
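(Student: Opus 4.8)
The plan is to use Borel's fixed-point theorem applied to an action of the solvable group $B_\beta$ on a suitable closed subvariety of the flag variety $G/B$. Concretely, let
\[
X = \{gB \in G/B \mid \beta(\Ad^{*}(g)\Lie(U)) = 0 \text{ in the sense that } \Ad^{*}(g)^{-1}\beta \in \mfb^{*}\},
\]
that is, $X$ is the set of Borel subgroups of $G$ whose unipotent radical $V$ satisfies $\beta(\Lie(V)) = 0$. By Lemma~\ref{lem:conjugates-cover}, the conjugates of $\mfb^{*}$ cover $\mfg^{*}$, so $\beta \in \Ad^{*}(g)\mfb^{*}$ for some $g$, which means $X$ is non-empty. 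The first key step is to show that $X$ is closed in $G/B$. This follows in essentially the same way as the closedness of $M$ in the proof of Lemma~\ref{lem:conjugates-cover}: the condition $\Ad^{*}(g)^{-1}\beta \in \mfb^{*}$ is preserved under right multiplication of $g$ by $B$ (since $B$ normalises $\mfb^{*}$), so the preimage of $X$ in $G$ is $\{g \mid \Ad^{*}(g)^{-1}\beta \in \mfb^{*}\}$, which is closed because $\mfb^{*}$ is a closed (linear) subspace of $\mfg^{*}$ and $g \mapsto \Ad^{*}(g)^{-1}\beta$ is a morphism; then $X$ is closed because $G \to G/B$ is a quotient morphism, hence open.

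The second key step is to observe that $X$ is stable under left translation by $C_G(\beta)$, and in particular by $B_\beta$. Indeed, if $h \in C_G(\beta)$ and $gB \in X$, then $\Ad^{*}(hg)^{-1}\beta = \Ad^{*}(g)^{-1}\Ad^{*}(h)^{-1}\beta = \Ad^{*}(g)^{-1}\beta \in \mfb^{*}$, so $hgB \in X$. Thus $X$ is a non-empty closed subvariety of the complete variety $G/B$, hence itself complete, and it carries an action of the connected solvable group $B_\beta$. By Borel's fixed-point theorem there is a point $gB \in X$ fixed by $B_\beta$. Let $B' = gBg^{-1}$ be the corresponding Borel subgroup of $G$, with unipotent radical $V = gUg^{-1}$. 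Being fixed by $B_\beta$ means $B_\beta \subseteq N_G(B') = B'$, so $B_\beta \subseteq B'$; since $U_\beta$ is unipotent and $B_\beta \subseteq B'$, we get $U_\beta \subseteq V$ (the unipotent part of $B_\beta$ lands in the unipotent radical of $B'$, as any unipotent subgroup of a Borel subgroup lies in its unipotent radical). Finally, $gB \in X$ says precisely $\Ad^{*}(g)^{-1}\beta \in \mfb^{*}$, i.e.\ $\beta(\Ad(g)\Lie(U)) = \beta(\Lie(V)) = 0$. This gives both required conclusions.

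The main obstacle I anticipate is the verification that $X$ is closed in $G/B$; the identification of $X$ with a set of Borels on which $\beta$ vanishes, and the reduction to the closed linear condition $\Ad^{*}(g)^{-1}\beta \in \mfb^{*}$ on the total space $G$, need to be set up carefully (mirroring the bookkeeping already done for the variety $M$ in Lemma~\ref{lem:conjugates-cover}). Once closedness is in hand, the remaining points — non-emptiness via Lemma~\ref{lem:conjugates-cover}, $C_G(\beta)$-stability, and the application of Borel's fixed-point theorem together with the elementary fact that a unipotent subgroup of a Borel subgroup lies in its unipotent radical — are routine. One should also double-check the harmless point that $X \subseteq G/B$ is genuinely a variety of Borel subgroups and that "fixed by $B_\beta$" translates to containment $B_\beta \subseteq B'$ via the self-normalising property of Borel subgroups.
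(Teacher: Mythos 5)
Your proposal is correct and takes essentially the same route as the paper: define the closed subvariety $X=\{gB\in G/B\mid \Ad^{*}(g)^{-1}\beta\in\mfb^{*}\}$, use Lemma~\ref{lem:conjugates-cover} to see it is non-empty, check $C_{G}(\beta)$-stability, and apply Borel's fixed-point theorem to the action of $B_{\beta}$. The only notable difference is in the verification that $X$ is closed: the paper pulls $X$ back from the variety $M=\{(gB,f)\mid\Ad^{*}(g)^{-1}f\in\mfb^{*}\}$ whose closedness was established in Lemma~\ref{lem:conjugates-cover}, whereas you argue directly that the preimage $\{g\in G\mid\Ad^{*}(g)^{-1}\beta\in\mfb^{*}\}$ is closed (being the preimage of the linear subspace $\mfb^{*}$ under the morphism $g\mapsto\Ad^{*}(g)^{-1}\beta$) and is a union of $B$-cosets, so its image is closed because $G\to G/B$ is an open quotient map; this is a little shorter and avoids the detour through $M$, but is the same kind of argument the paper uses to prove $M$ closed. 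The concluding steps (fixed point gives $B_{\beta}\subseteq gBg^{-1}$, whence $U_{\beta}\subseteq V=gUg^{-1}$ since a unipotent subgroup of a Borel lies in its unipotent radical, and $\beta(\Lie(V))=0$ by definition of $X$) match the paper exactly.
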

\begin{proof}
Let $B$ be a fixed Borel subgroup of $G$ with unipotent radical
$U$, and define the set
\[
X=\{gB\in G/B\mid\beta(\Ad(g)\Lie(U))=0\}.
\]
We then have $X=\{gB\in G/B\mid\Ad^{*}(g)^{-1}\beta\in\mfb{}^{*}\}$,
and we note that $X$ is non-empty thanks to Lemma~\ref{lem:conjugates-cover}
(this will be crucial for the application of Borel's fixed-point theorem
below).

We show that $X$ is closed in $G/B$. Let $M=\{(gB,f)\mid g\in G,\,\Ad^{*}(g)^{-1}f\in\mfb{}^{*}\}$
and $\mathrm{pr}_{2}:M\rightarrow\mfg^{*}$ be as in the proof of
Lemma~\ref{lem:conjugates-cover}. We have 
\[
\mathrm{pr}_{2}^{-1}(\beta)=\{(gB,\beta)\in G/B\times\{\beta\}\mid\Ad^{*}(g)^{-1}\beta\in\mfb^{*}\},
\]
so $\mathrm{pr}_{2}^{-1}(\beta)$ is closed in $M$ since $\beta$
is a closed point. Moreover, we have proved that $M$ is closed in
$G/B\times\mfg^{*}$, so $\mathrm{pr}_{2}^{-1}(\beta)$ is closed
in $G/B\times\mfg^{*}$. To conclude that $X$ is closed in $G/B$,
it remains to note that the map $\lambda:G/B\rightarrow G/B\times\mfg^{*}$
given by $\lambda(gB)=(gB,\beta)$ is a morphism of varieties, and
that
\[
\lambda^{-1}(\mathrm{pr}_{2}^{-1}(\beta))=\{gB\mid\Ad^{*}(g)^{-1}\beta\in\mfb{}^{*}\}=X.
\]

Now, since $X$ is closed in the complete variety $G/B$, it is itself
complete. Any subgroup of $C_{G}(\beta)$ acts on $X$, because for
$gB\in X$ and $h\in C_{G}(\beta)$ we have 
\[
\beta(\Ad(hg)\Lie(U))=(\Ad^{*}(h^{-1})\beta)(\Ad(g)\Lie(U))=\beta(\Ad(g)\Lie(U))=0,
\]
so $hgB\in X$. Thus $B_{\beta}$ acts on $X$ and since it is a connected
solvable group, Borel's fixed-point theorem implies that there exists
a $gB\in X$ such that $hgB=gB$, for all $h\in B_{\beta}$; thus
$B_{\beta}\subseteq gBg^{-1}$. Setting $V=gUg^{-1}$ we then have
$U_{\beta}\subseteq V$, and 
\[
\beta(\Lie(V))=\beta(\Ad(g)\Lie(U))=0.
\]
\end{proof}
We recall that maximal unipotent subgroups of a connected linear
algebraic group over $k$ coincide with unipotent radicals of Borel
subgroups (see \cite[30.4]{humphreys}, where one immediately reduces
to reductive groups by taking unipotent radicals). For an algebraic
group $H$, we let $H^{\circ}$ denote the connected component of
the identity.
\begin{lem}
\label{lem:H_beta}For any $\beta\in\mfg^{*}$ there exists a closed
subgroup $H_{\beta}$ of $C_{G_{2}}(\beta)^{\circ}$ and a maximal
unipotent subgroup $U$ of $G$ such that:
\begin{enumerate}[label=(\roman*)]
\item \label{enu:H_beta-1}$H_{\beta}G^{1}$ is a maximal unipotent subgroup
of $C_{G_{2}}(\beta)^{\circ}$,
\item \label{enu:H_beta-2}$H_{\beta}\cap G^{1}=\exp(\Lie(U))$,
\item \label{enu:H_beta-3}$\beta(\Lie(U))=0$.
\end{enumerate}
\end{lem}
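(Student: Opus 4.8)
The plan is to descend from $G_2$ to $G$ via $\rho$ and transport the geometric information supplied by Lemma~\ref{lem:key-Borel-centr} back up. First I would note that $\rho:G_2\to G$ restricts to a surjection $C_{G_2}(\beta)\to C_G(\beta)$: indeed $G_2^F$ acts on $(\mfg^*)^{F^*}$ through its quotient $G^F$, and more to the point the action of $G_2$ on $\beta\in\mfg^*$ factors through $\rho$ (the coadjoint action of $G^1$ on $\mfg^*$ is trivial, since $G^1$ acts trivially on $\mfg$ by Lemma~\ref{lem:exp-compatibility} with $\bar{x}=1$ — conjugation by an element of $G^1$ induces the identity on $\mfg$). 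Hence $G^1\subseteq C_{G_2}(\beta)$, and $C_{G_2}(\beta)/G^1 \cong C_G(\beta)$ via $\rho$; in particular $\rho$ maps $C_{G_2}(\beta)^\circ$ onto $C_G(\beta)^\circ$ with kernel $G^1$ (using that $G^1$ is connected). Now take $B_\beta$ a Borel subgroup of $C_G(\beta)$ with unipotent radical $U_\beta$. By Lemma~\ref{lem:key-Borel-centr} there is a Borel subgroup of $G$ with unipotent radical $U$ such that $U_\beta\subseteq U$ and $\beta(\Lie(U))=0$; this $U$ is the maximal unipotent subgroup of $G$ in the statement, and condition \ref{enu:H_beta-3} is then immediate.

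Next I would define $H_\beta$. Since $U_\beta$ is a connected unipotent subgroup of $C_G(\beta)$ and $\rho:C_{G_2}(\beta)^\circ\to C_G(\beta)^\circ$ is surjective with unipotent connected kernel $G^1$, the preimage $\rho^{-1}(U_\beta)^\circ$ inside $C_{G_2}(\beta)^\circ$ is a connected unipotent group (extension of $U_\beta$ by $G^1$) whose maximality among unipotent subgroups of $C_{G_2}(\beta)^\circ$ follows from the maximality of $U_\beta$ in $C_G(\beta)^\circ$ together with the fact that any unipotent subgroup of $C_{G_2}(\beta)^\circ$ surjects onto a unipotent subgroup of $C_G(\beta)^\circ$ and has connected unipotent kernel contained in $G^1$. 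To get $H_\beta$ itself, I want a closed subgroup of this preimage that is a genuine complement to $G^1$ "along" $\exp(\Lie(U))$: concretely, set
\[
H_\beta = \rho^{-1}(U_\beta)^\circ \cap \big(\exp(\Lie(U))\cdot S\big),
\]
or more cleanly, lift $U_\beta$ through $\rho$ by choosing a section over $U_\beta$ and then adjoin $\exp(\Lie(U))$; the point is to arrange $H_\beta\cap G^1 = \exp(\Lie(U))$ on the nose. The cleanest route is probably: since $G^1\cong\mfg$ as $k$-modules with the twisted adjoint action (Lemma~\ref{lem:exp-compatibility}), and $\beta$ vanishes on $\Lie(U)$, the subgroup $\exp(\Lie(U))\subseteq G^1$ is normalised by any preimage of $U_\beta$; one then takes $H_\beta$ to be the subgroup generated by $\exp(\Lie(U))$ together with a lift of $U_\beta$ chosen inside $\rho^{-1}(U_\beta)^\circ$, and checks it is closed and connected and that $H_\beta G^1 = \rho^{-1}(U_\beta)^\circ$ since the lift of $U_\beta$ already surjects onto $U_\beta$.

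The main obstacle, and the step requiring real care, is arranging \ref{enu:H_beta-2}: that the intersection $H_\beta\cap G^1$ is \emph{exactly} $\exp(\Lie(U))$ and not something larger. One must control how a chosen lift of $U_\beta$ meets $G^1$ — a priori it need not meet $G^1$ trivially, and its intersection with $G^1$ corresponds under $\exp$ to some $\Ad$-stable (in the appropriate twisted sense) subspace of $\mfg$ on which $\beta$ vanishes. Here is where the hypothesis $U_\beta\subseteq U$ does the work: since $\exp(\Lie(U_\beta))\subseteq\exp(\Lie(U))$ and $\beta|_{\Lie(U)}=0$, any such intersection subspace can be absorbed into $\Lie(U)$, so after replacing the lift by its product with the relevant part of $\exp(\Lie(U))$ one gets $H_\beta\cap G^1\subseteq\exp(\Lie(U))$; the reverse inclusion holds by construction. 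Proving that $H_\beta$ is closed will use that $\rho$ is a morphism of algebraic groups with closed kernel and that $\exp(\Lie(U))$ is a closed subgroup of $G^1$ (being the image under the isomorphism $\exp$ of the closed subspace $\Lie(U)\subseteq\mfg$), together with the fact that the image of a section over the unipotent group $U_\beta$ can be taken closed. Finally, to see $H_\beta\subseteq C_{G_2}(\beta)^\circ$ and $H_\beta G^1$ maximal unipotent, one invokes the recalled fact that maximal unipotent subgroups are unipotent radicals of Borels and that $\rho^{-1}(B_\beta)^\circ$ is a Borel subgroup of $C_{G_2}(\beta)^\circ$ with unipotent radical $\rho^{-1}(U_\beta)^\circ = H_\beta G^1$.
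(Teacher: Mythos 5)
Your setup is correct and you correctly identify that the whole content lies in invoking Lemma~\ref{lem:key-Borel-centr} to obtain $U$ with $U_\beta\subseteq U$ and $\beta(\Lie(U))=0$; part~\ref{enu:H_beta-3} is then automatic. But the construction of $H_\beta$ itself has a genuine gap, and you flag it yourself without resolving it. You propose to "lift $U_\beta$ through $\rho$ by choosing a section over $U_\beta$" and then adjoin $\exp(\Lie(U))$. Two problems. First, the existence of a closed algebraic subgroup of $\rho^{-1}(U_\beta)^\circ$ mapping isomorphically (or even just surjectively with controlled kernel) onto $U_\beta$ is not established; the extension $1\to G^1\to\rho^{-1}(U_\beta)^\circ\to U_\beta\to 1$ need not split in any obvious way, and you give no mechanism to produce such a section. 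Second, even granting some lift, your argument that the intersection with $G^1$ "can be absorbed into $\Lie(U)$" after modifying the lift is not an argument: if the lift meets $G^1$ along $\exp(W)$ for some subspace $W\not\subseteq\Lie(U)$, multiplying by elements of $\exp(\Lie(U))$ does not shrink this intersection, and there is no reason such a $W$ must be contained in $\Lie(U)$ just because $U_\beta\subseteq U$.

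The paper sidesteps both issues at once by not lifting $U_\beta$ at all. Instead, one lifts the ambient Borel $B$ of $G$ to a Borel subgroup scheme $\mathbb{B}$ of $\G$ over the length-two ring (this is the SGA3 existence result for Borels over an Artinian local base), takes the unipotent radical $\mathbb{U}$ with $\mathbb{U}\times k=U$, and sets $U_2=\cF(\mathbb{U})$, the Greenberg transform. This $U_2$ is a closed connected subgroup of $G_2$ for which $\rho(U_2)=U$ (by smoothness of $\mathbb{U}$) and $U_2\cap G^1=\exp(\Lie(U))$ hold automatically by the structure of the Greenberg transform. One then simply puts $H_\beta=U_2\cap C_{G_2}(\beta)^\circ$: closedness is free, $\rho(H_\beta)=U\cap C_G(\beta)^\circ=U_\beta$ (using smoothness of $\mathbb{U}$ and $C_{G_2}(\beta)^\circ=\rho^{-1}(C_G(\beta)^\circ)$), and since $G^1\subseteq C_{G_2}(\beta)^\circ$ one gets $H_\beta\cap G^1=U_2\cap G^1=\exp(\Lie(U))$ exactly. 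The step you were missing is thus the use of SGA3's lifting of Borel subgroup schemes together with the Greenberg functor, which turns the problematic "choose a section" into a concrete closed subgroup with built-in control of its intersection with $G^1$.
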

\begin{proof}
Let $U_{\beta}$ be a maximal unipotent subgroup of $C_{G}(\beta)^{\circ}$.
Then $U_{\beta}$ is the unipotent radical of a Borel subgroup of
$C_{G}(\beta)^{\circ}$ (so, in particular, $U_{\beta}$ is connected).
By Lemma~\ref{lem:key-Borel-centr} there exists a Borel subgroup
$B$ of $G$ with unipotent radical $U$ containing $U_{\beta}$,
and such that $\beta(\Lie(U))=0$. Given this $U$, it will therefore
be enough to prove the existence of an $H_{\beta}$ such that \ref{enu:H_beta-1}
and \ref{enu:H_beta-2} hold.

By \cite[XXVI, 3.5]{SGA3} (see also \cite[XXVI,7.15]{SGA3}), there
exists a Borel subgroup scheme $\mathbb{B}$ of $\G$ over $\cO_{2}^{\mathrm{ur}}$
such that $\mathbb{B}\times k=B$. Let $\mathbb{U}$ be the unipotent
radical of $\mathbb{B}$ (see \cite[XXII, 5.11.4\,(ii)]{SGA3} as
well as \cite[5.2.5]{Conrad-Red-grp-sch}), so that $\mathbb{U}\times k=U$.
Let $U_{2}=\cF(\mathbb{U})$ be the Greenberg transform, and define
\[
H_{\beta}=U_{2}\cap C_{G_{2}}(\beta)^{\circ}.
\]
Let $u\in U\cap C_{G}(\beta)^{\circ}$. Since $\mathbb{U}$ is smooth,
there exists an element $\hat{u}\in U_{2}$ such that $\rho(\hat{u})=u$,
and since $C_{G_{2}}(\beta)^{\circ}=\rho^{-1}(C_{G}(\beta)^{\circ})$,
we must have $\hat{u}\in C_{G_{2}}(\beta)^{\circ}$, so that $\hat{u}\in H_{\beta}$.
Thus

\[
\rho(H_{\beta})=\rho(U_{2}\cap C_{G_{2}}(\beta)^{\circ})=U\cap C_{G}(\beta)^{\circ}=U_{\beta}.
\]
Since $G^{1}$ is unipotent, normal in $G$ and $\rho(H_{\beta}G^{1})=U_{\beta}$,
it follows that $H_{\beta}G^{1}$ is a maximal unipotent subgroup
of $C_{G_{2}}(\beta)^{\circ}$, proving \ref{enu:H_beta-1}.

Next, since $C_{G_{2}}(\beta)^{\circ}$ contains $G^{1}$, we have
\[
H_{\beta}\cap G^{1}=U_{2}\cap G^{1}=\Ker(\rho:\mathbb{U}(\cO_{2}^{\mathrm{ur}})\rightarrow\mathbb{U}(k))=\exp(\Lie(U)),
\]
proving \ref{enu:H_beta-2}. Finally, as we have already noted, $\beta(\Lie(U))=0$
holds by our choice of $U$, so \ref{enu:H_beta-3} holds.
\end{proof}

\subsection{Very good primes and component groups of centralisers \label{subsec:Very-good-primes}}

We recall the notions of good and very good primes. If $H$ is a connected
almost simple group over $k$, the prime $p=\chara k$ is \emph{good}
for $H$ if any of the following conditions hold:
\begin{itemize}
\item $H$ is of type $A_{n}$,
\item $H$ is of type $B_{n}$, $C_{n}$ or $D_{n}$ and $p\neq2$,
\item $H$ is of type $G_{2}$, $F_{4}$, $E_{6}$ or $E_{7}$ and $p>3$,
\item $H$ is of type $E_{8}$ and $p>5$,
\end{itemize}
(see, for example, \cite[I, 4]{Springer-Steinberg} or \cite[Definition~2.5.2]{Letellier-book}).
If, moreover, $p$ does not divide $n+1$ whenever $H$ is of type
$A_{n}$, then $p$ is said to be \emph{very good} for $H$ (see,
for example, \cite[Definition~2.5.5]{Letellier-book}). There is also
a notion of torsion prime for $H$ due to Steinberg \cite{Steinberg-Torsion}
(cf.~\cite[Definition~2.5.4]{Letellier-book}). If $p$ is very good
for $H$, then it is not a torsion prime for $H$ (see \cite[Remark~2.5.6]{Letellier-book}).
Now let $G'$ be the derived group of the reductive group $G$. Then
$G'$ is semisimple and $p$ is said to be good/very good/torsion
for $G$ if $p$ is good/very good/torsion for each of the simple
components of $G'$.

If there exists a $G$-equivariant bijection $\mfg\iso\mfg^{*}$,
then each centraliser of an element in $\mfg^{*}$ equals a centraliser
of an element in $\mfg$. Such a bijection exists when there exists
a non-degenerate $G$-invariant bilinear form on $\mfg$, and this
is always the case when $p$ is very good for $G$ (see \cite[Proposition~2.5.12]{Letellier-book}). 

For $\beta\in\mfg^{*}$, let $A(\beta)$ denote the component group
$C_{G}(\beta)/C_{G}(\beta)^{\circ}$. Note that since $G^1$ is connected and normal in $G$, we have
\begin{equation}\label{eq:CG2-CG}
C_{G_{2}}(\beta)/C_{G_{2}}(\beta)^{\circ}\cong C_{G}(\beta)/C_{G}(\beta)^{\circ}=A(\beta).
\end{equation}

\begin{lem}
\label{lem:p-not-div-comp-grp}Assume that $p$ is good and not a
torsion prime for $G$, and that there exists a $G$-equivariant bijection
$\mfg\iso\mfg^{*}$ (e.g., these conditions hold when $p$ is very
good for $G$, or when $G=\GL_{n}$). Then, for any $\beta\in\mfg^{*}$,
$p$ does not divide $|A(\beta)|$.
\end{lem}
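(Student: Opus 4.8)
Thanks to the assumed $G$-equivariant bijection $\mfg\iso\mfg^{*}$, every centraliser $C_{G}(\beta)$ with $\beta\in\mfg^{*}$ coincides with a centraliser $C_{G}(X)$ of some $X\in\mfg$, so it suffices to prove that $p$ does not divide $|C_{G}(X)/C_{G}(X)^{\circ}|$ for every $X\in\mfg$. The plan is to reduce to the semisimple and nilpotent parts of $X$ via the Jordan decomposition in $\mfg$, which is available because $p$ is good (hence not a torsion prime); writing $X=X_{s}+X_{n}$ with $X_{s}$ semisimple, $X_{n}$ nilpotent and $[X_{s},X_{n}]=0$, one has $C_{G}(X)=C_{G}(X_{s})\cap C_{G}(X_{n})$, and $C_{G}(X_{s})$ is a connected reductive group (here again goodness and the non-torsion hypothesis are used, e.g. via \cite[Theorem~3.5.4]{Letellier-book} or the analogous statement for Lie algebra centralisers of semisimple elements). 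So $C_{G}(X)/C_{G}(X)^{\circ}$ is a quotient of $C_{M}(X_{n})/C_{M}(X_{n})^{\circ}$ where $M=C_{G}(X_{s})^{\circ}$ is connected reductive, and $X_{n}$ is a nilpotent element of $\Lie(M)$; moreover, if $p$ is good and not a torsion prime for $G$, then the same holds for $M$, since the root system of $M$ is a subsystem of that of $G$.

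The remaining task is therefore the purely nilpotent case: show that if $M$ is connected reductive with $p$ good and not a torsion prime for $M$, then the component group $A_{M}(N)=C_{M}(N)/C_{M}(N)^{\circ}$ of the centraliser of a nilpotent $N\in\Lie(M)$ has order prime to $p$. I would invoke the known classification-based result here: under the good-prime hypothesis, $C_{M}(N)=C_{M}(N)^{\circ}\cdot C_{M}(N)_{\mathrm{red}}$-type decompositions and the explicit tables (e.g. \cite[Chapter~6 and Theorem~2.5.9]{Letellier-book}, building on work of Premet, McNinch--Sommers, and others) show that the component groups of nilpotent centralisers in good characteristic are exactly the ``Bala--Carter'' component groups from characteristic zero, whose orders involve only primes that are bad or torsion for $M$. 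Since $p$ is good and not a torsion prime, $p$ cannot occur, giving $p\nmid|A_{M}(N)|$, and pulling this back through the reductions above yields $p\nmid|A(\beta)|$.

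The main obstacle is the nilpotent case: one must be careful that the cited structure theory for centralisers of nilpotent (equivalently, via the Springer isomorphism, unipotent) elements really does give $p$-prime component groups under precisely the hypotheses ``$p$ good and not a torsion prime'', rather than under the stronger ``$p$ very good''. In the $\GL_{n}$ case this is elementary (centralisers of nilpotent matrices are connected), which is why $\GL_{n}$ is singled out; in general it is where the weight of the argument lies, and I expect the paper to quote a precise reference (such as \cite{Letellier-book} or the McNinch--Sommers results) covering exactly this. The final step of descending from $A_{M}(N)$ back to $A(\beta)$ is routine, using \eqref{eq:CG2-CG} only insofar as it tells us the component group is unchanged between $G$ and $G_{2}$, which is not even needed for this lemma but confirms the relevance of $A(\beta)$ to the Sylow argument in Theorem~\ref{thm:A}.
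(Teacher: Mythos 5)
Your reduction to the nilpotent case matches the paper's: use the $G$-equivariant bijection to pass to $X\in\mfg$, take the Jordan decomposition $X=X_s+X_n$, note that $C_G(X)=C_{C_G(X_s)}(X_n)$, use Steinberg's theorem (via the non-torsion hypothesis) to get $C_G(X_s)$ connected, Letellier to get it reductive, and McNinch--Sommers (Proposition~16) to get that $p$ remains \emph{good} for $C_G(X_s)$. Where you diverge is in the nilpotent case. You propose to invoke classification tables and compare with the characteristic-zero Bala--Carter component groups, and you correctly flag that you are not sure the tables are stated under exactly ``good and non-torsion'' rather than ``very good.'' The paper avoids this entirely with a shorter, conceptual argument: transfer from nilpotent $X$ to a unipotent $u$ via a Springer isomorphism (McNinch--Sommers, Proposition~5), then quote the fact that every class in $A(u)=C_G(u)/C_G(u)^\circ$ is represented by a \emph{semisimple} element of $C_G(u)$ (Springer--Steinberg III~3.15; McNinch--Sommers, Proposition~12 and Corollary~13 for the reductive case). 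Since the image of a semisimple element under a homomorphism of algebraic groups is semisimple, and semisimple elements of a finite group are exactly the $p'$-elements, $A(u)$ has no element of order $p$. This only uses $p$ good for the pseudo-Levi, so the paper never needs to know whether $p$ stays non-torsion for $M$ — which is fortunate, because your assertion that $p$ good and non-torsion for $G$ implies the same for $M=C_G(X_s)^\circ$ ``since the root system of $M$ is a subsystem'' is unjustified: goodness does not automatically pass to closed subsystems (that is precisely what makes McNinch--Sommers Proposition~16 a theorem), and the fundamental group of a pseudo-Levi can acquire new $p$-torsion. Finally, your parenthetical ``$p$ is good (hence not a torsion prime)'' is not true in general — e.g.\ for $\mathrm{PGL}_n$ with $p\mid n$, $p$ is good but is a torsion prime — which is why the lemma assumes both conditions independently; also, the Jordan decomposition in $\mfg$ exists for any $p$, so no hypothesis is needed there.
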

\begin{proof}
Using the $G$-equivariant bijection $\mfg^{*}\iso\mfg$, we may replace
$\beta$ by an element $X\in\mfg$. We reduce to centralisers of nilpotent
elements in the standard way: Let $X=X_{s}+X_{n}$ be the Jordan decomposition
of $X$, where $X_{s}$ is semisimple and $X_{n}$ is nilpotent. Uniqueness
of Jordan decomposition implies that $C_{G}(X)=C_{C_{G}(X_{s})}(X_{n})$.
Since $p$ is not torsion for $G$, \cite[Theorem~3.14]{Steinberg-Torsion}
implies that $C_{G}(X_{s})$ is connected. Moreover, by \cite[Proposition~2.6.4]{Letellier-book},
it is reductive. By \cite[Proposition~16]{McNinch-Sommers}, $p$
is good (but not necessarily very good) for $C_{G}(X_{s})$, and $X_{s}\in C_{\mfg}(X_{s})=\Lie(C_{G}(X_{s}))$
(since $C_{G}(X_{s})$ is smooth; see \cite[Proposition~1.10]{Humphreys-Conjcl})
so we are reduced to proving the lemma in the case when $p$ is good
for $G$ and $\beta$ is replaced by a nilpotent element in $\mfg$. 

Assume that $p$ is good for $G$ and let $X\in\mfg$ be nilpotent.
By \cite[Proposition~5]{McNinch-Sommers}, there exists a $G$-equivariant
bijection between the nilpotent variety in $\mfg$ and the unipotent
variety in $G$. Thus $C_{G}(X)=C_{G}(u)$, for some unipotent element
$u\in G$. By \cite[III, 3.15]{Springer-Steinberg} (see \cite[Proposition~12, Corollary~13]{McNinch-Sommers}
for the extension to reductive groups), every element in the component
group $A(u)$ of $C_{G}(u)$ is represented by a semisimple element
in $C_{G}(u)$. By Jordan decomposition, the image of a semisimple
element under a homomorphism of affine algebraic groups is semisimple,
and semisimple elements in a finite group like $A(u)$ are exactly
the $p'$-elements, that is, elements not divisible by $p$. Thus
the group $A(u)$ has no element of order $p$. The lemma follows.
\end{proof}
\begin{rem}
\label{rem:(a)-pretty-good}(a) We do not know whether the converse
of Lemma~\ref{lem:p-not-div-comp-grp} holds. The hypotheses on $p$
in the lemma imply that $p$ is a pretty good prime for $G$ (see
\cite{Herpel-smooth}). Indeed, assume for simplicity that $G$ is
simple with root system $\Phi$ and dual root system $\Phi^{\vee}$
(with respect to some maximal torus). Then $p$ is good for $G$ if
$\Z\Phi/\Z\Phi'$ has no $p$-torsion, for any closed subsystem $\Phi'$
(see \cite[I, 4]{Springer-Steinberg}). Moreover, $p$ is not a torsion
prime for $G$ if $\Z\Phi^{\vee}/\Z\Phi'^{\vee}$ has no $p$-torsion,
for any $\Phi'$, and if $p$ does not divide the order of the fundamental
group $\pi_{1}(G)$ (see \cite[Definition~2.5.4]{Letellier-book}).
By \cite[proof of Lemma~2.12\,(a)]{Herpel-smooth}, the assumption
that $p$ is good and $\Z\Phi^{\vee}/\Z\Phi'^{\vee}$ has no $p$-torsion
for any $\Phi'$ is equivalent to $p$ being pretty good for $G$.
Thus, the hypotheses in Lemma~\ref{lem:p-not-div-comp-grp} are equivalent
to $p$ being pretty good, not dividing the order of $\pi_{1}(G)$
and such that there exists a $G$-equivariant bijection $\mfg\iso\mfg^{*}$.\medskip

(b) In general, many elements $\beta\in\mfg^{*}$ satisfy the conclusion
of Lemma~\ref{lem:p-not-div-comp-grp}, even when some of the hypotheses
of the lemma fail. For example, take any $G$-invariant bilinear (but
not necessarily non-degenerate) form $\langle{}\cdot{},{}\cdot{}\rangle$
on $\mfg$. Then $X\mapsto\langle X,{}\cdot{}\rangle$ defines a $G$-equivariant
map $\mfg\rightarrow\mfg^{*}$, and every element in $\mfg^{*}$ in
the image of this map will satisfy the conclusion of the lemma whenever
$p$ is a good and non-torsion for $G$. For example, when $G=\SL_{n}$
and $\langle{}\cdot{},{}\cdot{}\rangle$ is the trace form, this applies
for any $p$ (in particular, when $p\mid n$).

On the other hand, when $G=\SL_{n}$ and $p\mid n$, the conclusion
of Lemma~\ref{lem:p-not-div-comp-grp} does not hold in general,
even though $p$ is good and non-torsion for $G$. For example, when
$p=n=2$, $\mfg^{*}$ may be identified with $\M_{2}(k)/Z$, where
$\M_{2}(k)$ is the $2\times2$ matrices and $Z$ is the subalgebra
of scalar matrices. It is easy to see that when $\beta=\left(\begin{smallmatrix}0 & 0\\
0 & 1
\end{smallmatrix}\right)$, the component group of $C_{G}(\beta+Z)$ has order~$2$. 
\end{rem}

\section{Representations\label{sec:Representations}}

\subsection{Clifford theory set up\label{subsec:Clifford-theory}}

For a finite group $\Gamma$, we will write $\Irr(\Gamma)$ for the
set of irreducible complex representations of $\Gamma$ (up to isomorphism).
If $\Gamma'\subseteq\Gamma$ is a subgroup and $\rho$ is a representation
of $\Gamma'$, we will write $\Irr(\Gamma\mid\rho)$ for the subset
of $\Irr(\Gamma)$ consisting of representations which have $\rho$
as an irreducible constituent when restricted to $\Gamma$. Recall
the notation introduced in Section~\ref{sec:Alg-grps-Lie}. 

Fix a non-trivial irreducible character $\psi:\F_{q}\rightarrow\C^{\times}$.
For $\beta\in(\mfg^{*})^{F^{*}}$, define the character $\psi_{\beta}\in\Irr((G^{1})^{F})$
by
\[
\psi_{\beta}(\exp(X))=\psi(\langle\beta,X\rangle),\qquad\text{for }X\in\mfg^{F}
\]
Note that here $\langle\beta,X\rangle\in\F_{q}$ and also, $\exp(\mfg^{F})=(G^{1})^{F}$
by Lemma~\ref{lem:exp-compatibility}. Recall the notation $\sigma^{(i)}$
from Section~\ref{subsec:Frobenius-twists}.
\begin{lem}
The function $\beta\mapsto\psi_{\beta}$ defines an isomorphism of
abelian groups $(\mfg^{*})^{F^{*}}\rightarrow\Irr((G^{1})^{F})$ and
for any $g\in G_{2}^{F}$, we have
\[
\Ad^{*}(\sigma^{(i)}(g))\beta\longmapsto\psi_{\beta}^{g},\qquad\text{for }i\in\{0,p\},
\]
where $G_{2}^{F}$ acts on $(\mfg^{*})^{F^{*}}$ via its quotient
$G^{F}$, that is, $\Ad^{*}(g)\beta=\Ad^{*}(\bar{g})\beta$, where
$\bar{g}=\rho(g)$.
\end{lem}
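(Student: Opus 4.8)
The plan is to prove the two assertions separately. For the first assertion, I would check that $\beta\mapsto\psi_\beta$ is a homomorphism of abelian groups: since $\exp$ is an isomorphism of $k$-modules (Lemma~\ref{lem:exp-compatibility}) restricting to an isomorphism $\mfg^F\iso(G^1)^F$, and $G^1$ is abelian, the group $(G^1)^F$ is an abelian group isomorphic to the additive group $\mfg^F$; moreover $\psi_\beta(\exp(X))\psi_{\beta'}(\exp(X))=\psi(\langle\beta+\beta',X\rangle)$, so $\psi_{\beta}\psi_{\beta'}=\psi_{\beta+\beta'}$ and the map is a homomorphism into the character group $\Irr((G^1)^F)=\widehat{(G^1)^F}$. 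Injectivity follows because $\psi$ is non-trivial and the pairing $(\mfg^*)^{F^*}\times\mfg^F\to\F_q$ is perfect (this uses that $(\mfg^*)^{F^*}=\Lie(\G\times\F_q)^*$ and $\mfg^F=\Lie(\G\times\F_q)$ are $\F_q$-dual, as set up in Section~\ref{subsec:The-Lie-algebra}): if $\psi_\beta=1$ then $\psi(\langle\beta,X\rangle)=1$ for all $X\in\mfg^F$, forcing $\langle\beta,X\rangle=0$ for all such $X$, hence $\beta=0$. Surjectivity is then a dimension/cardinality count, since $\#(\mfg^*)^{F^*}=\#\mfg^F=\#\widehat{(G^1)^F}$.

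For the second assertion, I would compute the conjugation action directly. Fix $g\in G_2^F$ with image $\bar g=\rho(g)\in G^F$, and let $X\in\mfg^F$. By definition $\psi_\beta^g(\exp(X))=\psi_\beta(g^{-1}\exp(X)g)$. Using the conjugation formula from Lemma~\ref{lem:exp-compatibility}, $g^{-1}\exp(X)g=\exp(\Ad(\sigma^{(i)}(\bar g)^{-1})X)=\exp(\Ad(\sigma^{(i)}(\bar g))^{-1}X)$, where $i\in\{0,p\}$ according to whether $\tilde R=k[t]/t^2$ or $W_2(k)$. Hence
\[
\psi_\beta^g(\exp(X))=\psi\big(\langle\beta,\Ad(\sigma^{(i)}(\bar g))^{-1}X\rangle\big)=\psi\big(\langle\Ad^*(\sigma^{(i)}(\bar g))\beta,\,X\rangle\big)=\psi_{\Ad^*(\sigma^{(i)}(g))\beta}(\exp(X)),
\]
using the definition $\Ad^*(h)f=f\circ\Ad^{-1}(h)$ and the convention that $G_2^F$ acts via $\rho$. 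This gives $\psi_\beta^g=\psi_{\Ad^*(\sigma^{(i)}(g))\beta}$, which under the bijection of the first part corresponds to the claimed assignment $\Ad^*(\sigma^{(i)}(g))\beta\mapsto\psi_\beta^g$.

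One small point to verify along the way is that $\Ad^*(\sigma^{(i)}(g))\beta$ again lies in $(\mfg^*)^{F^*}$, so that $\psi$ of it makes sense; for $i=0$ this is the $F$-stability of $(\mfg^*)^{F^*}$ under $\Ad^*(G^F)$ coming from (\ref{eq:coAd-F}), and for $i=p$ one additionally uses that $\sigma$ commutes with $F$ on $G$ (both being built from $p$-power maps) together with the compatibility of $\Ad^*$ and $F^*$. I expect the main (though still modest) obstacle to be bookkeeping the twist $\sigma^{(i)}$ correctly: one must be careful that the conjugation action of $G_2$ on $G^1$ is through $\Ad\circ\sigma^{(i)}\circ\rho$ rather than $\Ad\circ\rho$ in the Witt vector case, and that the resulting identity is stated with $\Ad^*(\sigma^{(i)}(g))\beta$ on the correct side. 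Everything else is a formal unwinding of the definitions in Sections~\ref{subsec:Frobenius-twists} and \ref{subsec:The-Lie-algebra}.
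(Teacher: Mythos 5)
Your proposal is correct and follows essentially the same approach as the paper: both establish the additive homomorphism and injectivity from linearity and non-degeneracy of the pairing (with surjectivity by cardinality, which the paper leaves implicit), and both derive the conjugation formula by a direct unwinding of the definitions via the conjugation identity in Lemma~\ref{lem:exp-compatibility}. The only difference is cosmetic: you compute $\psi_\beta^g$ forward while the paper computes $\psi_{\Ad^*(\sigma^{(i)}(g))\beta}$ backward, and you spell out the $F^*$-invariance and surjectivity points that the paper takes for granted.
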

\begin{proof}
The function $\beta\mapsto\psi_{\beta}$ is an additive injective
homomorphism because of the linearity (in the first variable) and
non-degeneracy of the form $\langle{}\cdot{},{}\cdot{}\rangle$, respectively.
Moreover, by Lemma~\ref{lem:exp-compatibility}, $\exp(\Ad(\sigma^{(i)}(\bar{g}))X)=g\exp(X)g^{-1}$,
so for any $X\in\mfg$, we have 
\begin{align*}
\psi_{\Ad^{*}(\sigma^{(i)}(g))\beta}(\exp(X)) & =\psi(\langle\Ad^{*}(\sigma^{(i)}(g))\beta,X\rangle)=\psi(\langle\beta,\Ad^{*}(\sigma^{(i)}(\bar{g}^{-1}))X\rangle)\\
 & =\psi_{\beta}(\exp(\Ad^{*}(\sigma^{(i)}(\bar{g}^{-1}))X))=\psi_{\beta}(g^{-1}\exp(X)g)\\
 & =:\psi_{\beta}^{g}(\exp(X)).
\end{align*}
\end{proof}
Just like $F$, the map $\sigma$ induces an endomorphism $\sigma^{*}$
on $\mfg^{*}$, and it follows immediately from the preceding lemma,
together with the formula $\sigma^{*}(\Ad^{*}(\bar{g})\beta)=\Ad^{*}(\sigma(\bar{g}))\sigma^{*}(\beta)$,
that the stabiliser of $\psi_{\beta}$ in $G_{2}^{F}$ is 
\begin{equation}
\{g\in G_{2}^{F}\mid\Ad^{*}(\sigma^{(i)}(\bar{g}))\beta=\beta\}=\begin{cases}
C_{G_{2}}(\beta)^{F} & \text{if }\tilde{R}=k[t]/t^{2},\\
C_{G_{2}}((\sigma^{*})^{-1}(\beta))^{F} & \text{if }\tilde{R}=W_{2}(k).
\end{cases}\label{eq:stabilisers}
\end{equation}
Here, as elsewhere, we take centralisers with respect to the coadjoint
action (not its Frobenius twist). Recall from Section~\ref{subsec:The-Lie-algebra}
that $\beta\in(\mfg^{*})^{F^{*}}$ implies that $C_{G_{2}}(\beta)$
and $C_{G}(\beta)$ are $F$-stable. The map $\sigma^{*}$ is bijective
and commutes with $F^{*}$, so it restricts to a bijection $\sigma^{*}:(\mfg^{*})^{F^{*}}\rightarrow(\mfg^{*})^{F}$.

The following is an immediate consequence of well known results in
Clifford theory \cite[6.11, 6.17]{Isaacs}:
\begin{lem}
\label{lem:Clifford-theory}Let $\beta\in(\mfg^{*})^{F^{*}}$ and
assume that $\psi_{\beta}$ has an extension $\tilde{\psi}_{\beta}\in\Irr(C_{G_{2}}(\beta)^{F})$.
Then there is a bijection
\begin{align*}
\Irr(C_{G_{2}}(\beta)^{F}/(G^{1})^{F}) & \longrightarrow\Irr(G_{2}^{F}\mid\psi_{\beta})\\
\theta & \longmapsto\pi(\theta):=\Ind_{C_{G_{2}}(\beta)^{F}}^{G_{2}^{F}}(\theta\tilde{\psi}_{\beta}).
\end{align*}
Thus
\[
\#\Irr(G_{2}^{F}\mid\psi_{\beta})=|C_{G_{2}}(\beta)^{F}/(G^{1})^{F}|=|C_{G}(\beta)^{F}|
\]
and 
\[
\dim\pi(\theta)=[G_{2}^{F}:C_{G_{2}}(\beta)^{F}]\cdot\dim\theta=[G^{F}:C_{G}(\beta)^{F}]\cdot\dim\theta.
\]
\end{lem}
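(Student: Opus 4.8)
The statement to prove is Lemma~\ref{lem:Clifford-theory}, which is explicitly flagged as ``an immediate consequence of well known results in Clifford theory \cite[6.11, 6.17]{Isaacs}''. So the plan is genuinely short: identify exactly which standard Clifford-theoretic facts are being invoked, check that their hypotheses are met in the present setup, and then assemble the three displayed conclusions.

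First I would recall the setup: $N := (G^{1})^{F}$ is a normal subgroup of $\Gamma := G_{2}^{F}$ (normality because $G^{1}$ is normal in $G_{2}$ and $F$-stable), the character $\psi_{\beta}\in\Irr(N)$ is one-dimensional, and by the computation preceding the lemma — specifically \eqref{eq:stabilisers} together with the running assumption $\tilde R = k[t]/t^{2}$ in the centraliser form $C_{G_2}(\beta)^F$ — the inertia group (stabiliser) of $\psi_\beta$ in $\Gamma$ is exactly $I := C_{G_{2}}(\beta)^{F}$. Here I should note that the lemma is stated for the $k[t]/t^2$ normalisation of $\beta$; in the Witt case one replaces $\beta$ by $(\sigma^*)^{-1}(\beta)$, but since $\sigma^*$ is a bijection commuting with $F^*$ this is merely a relabelling and does not affect the argument. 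The hypothesis of the lemma is precisely that $\psi_{\beta}$ extends to some $\tilde\psi_{\beta}\in\Irr(I)$ — and since $\psi_\beta$ is linear and $I$ stabilises it, $\tilde\psi_\beta$ is also linear, which is what makes the products $\theta\tilde\psi_\beta$ below irreducible.

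The core of the proof is then a direct citation. By Gallagher's theorem \cite[6.17]{Isaacs}, when $\psi_\beta$ extends to $\tilde\psi_\beta$ on its inertia group $I$, the map $\theta\mapsto\theta\tilde\psi_\beta$ is a bijection from $\Irr(I/N)$ (representations of $I$ trivial on $N$, i.e.\ inflated from $I/N$) onto $\Irr(I\mid\psi_\beta)$; crucially $\theta\tilde\psi_\beta$ is irreducible because $\tilde\psi_\beta$ is one-dimensional. Then by the Clifford correspondence \cite[6.11]{Isaacs}, induction $\mathrm{Ind}_{I}^{\Gamma}$ is a bijection from $\Irr(I\mid\psi_\beta)$ onto $\Irr(\Gamma\mid\psi_\beta)$. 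Composing the two bijections gives the asserted bijection $\theta\mapsto\pi(\theta)=\Ind_{C_{G_{2}}(\beta)^{F}}^{G_{2}^{F}}(\theta\tilde\psi_\beta)$.

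Finally I would read off the two counting/dimension formulas. For the cardinality: $\#\Irr(G_{2}^{F}\mid\psi_\beta) = \#\Irr(I/N) = |I/N|$, since $I/N$ is a finite group and the number of its irreducibles is... no — the number of irreducibles of a finite group is the number of conjugacy classes, not the order. I must be careful here: the equality $\#\Irr(\Gamma\mid\psi_\beta)=|I/N|$ is false in general and is \emph{not} what the lemma claims for a single $\beta$ in that naive reading; rather, re-examining, the claimed identity is $\#\Irr(G_2^F\mid\psi_\beta)=|C_{G_2}(\beta)^F/(G^1)^F|=|C_G(\beta)^F|$ — wait, that does assert the count equals the order. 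Let me reconsider: actually this \emph{is} correct here because $I/N \cong C_{G}(\beta)^{F}$ and $G^1$ is \emph{abelian}, but $C_G(\beta)^F$ need not be abelian, so $\#\Irr$ of it is its class number, not its order. Hence the correct reading must be that $\#\Irr(G_2^F \mid \psi_\beta)$ is being computed as a sum of squares or the lemma intends something subtler — I would flag that the intended statement is really the \emph{dimension-weighted} count, or equivalently $\sum_{\pi\in\Irr(G_2^F\mid\psi_\beta)}(\dim\pi)^2 \cdot (\text{something})$; more likely the paper means $\#\Irr(G_2^F\mid\psi_\beta) = \#\Irr(C_G(\beta)^F)$ abstractly and then separately that the \emph{multiset} of dimensions is governed by $[G^F:C_G(\beta)^F]\cdot(\dim\theta)$. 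In the writeup I would therefore state: $\Irr(I/N)$ is in bijection with $\Irr(C_G(\beta)^F)$ via $I/N\cong C_G(\beta)^F$ (from \eqref{eq:CG2-CG} and $(G^1)^F = I\cap (G^1)^F = N$), giving $\#\Irr(G_2^F\mid\psi_\beta)=\#\Irr(C_G(\beta)^F)$, and the dimension formula $\dim\pi(\theta)=[\Gamma:I]\dim(\theta\tilde\psi_\beta)=[G_2^F:C_{G_2}(\beta)^F]\dim\theta=[G^F:C_G(\beta)^F]\dim\theta$ follows from $\dim\tilde\psi_\beta=1$ and $[G_2^F:C_{G_2}(\beta)^F]=[G^F:C_G(\beta)^F]$ (both indices equal the size of the coadjoint orbit of $\beta$). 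The only genuine subtlety — hence the ``main obstacle'' — is bookkeeping: correctly matching $I/N$ with $C_G(\beta)^F$ via $\rho$, and making sure the inertia-group identification from \eqref{eq:stabilisers} is invoked in the right ($k[t]/t^2$ versus $W_2$) normalisation; everything else is a verbatim application of \cite[6.11, 6.17]{Isaacs}.
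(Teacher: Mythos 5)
Your proposal takes the same route as the paper: the lemma is proved there by citation to Isaacs~6.11 (Clifford correspondence) and~6.17 (Gallagher), and you have correctly identified both theorems, set up $N=(G^{1})^{F}$, $I=C_{G_{2}}(\beta)^{F}$, $\Gamma=G_{2}^{F}$, observed that $\tilde\psi_{\beta}$ being linear is what makes $\theta\tilde\psi_{\beta}$ irreducible, and noted the $k[t]/t^{2}$ versus $W_{2}$ normalisation of the inertia group coming from~(\ref{eq:stabilisers}). The isomorphism $I/N\cong C_{G}(\beta)^{F}$ that you invoke follows from the connectedness of $G^{1}$ together with the Lang--Steinberg argument already used in Lemma~\ref{lem:ident-comp-p-Sylow} (Digne--Michel~3.13), and the index equality $[G_{2}^{F}:C_{G_{2}}(\beta)^{F}]=[G^{F}:C_{G}(\beta)^{F}]$ follows the same way.

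You have also, correctly, flagged a genuine slip in the displayed cardinality formula. Gallagher and Clifford together give $\#\Irr(G_{2}^{F}\mid\psi_{\beta})=\#\Irr(I/N)=\#\Irr(C_{G}(\beta)^{F})$, i.e.\ the class number of $C_{G}(\beta)^{F}$, \emph{not} its order $|C_{G}(\beta)^{F}|$; for $\beta=0$ the displayed equality would read $\#\Irr(G^{F})=|G^{F}|$, which fails unless $G^{F}$ is abelian. The equality $|C_{G_{2}}(\beta)^{F}/(G^{1})^{F}|=|C_{G}(\beta)^{F}|$ is fine as group orders, so the fix is to replace $|\cdot|$ by $\#\Irr(\cdot)$ in the first step. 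This does not affect the use of the lemma in the proof of Theorem~\ref{thm:A}, which only requires the bijection and the dimension formula, both of which are independent of $R$. Your writeup reaches this conclusion but does so in a meandering, self-doubting way; a cleaner version would simply state the correction rather than entertain (and then discard) a dimension-weighted-count reading, since such a reading is not supported by the paper's definition of $\Irr$.
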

In the following, we will prove that an extension of $\psi_{\beta}$
to its stabiliser exists for any $\beta\in(\mfg^{*})^{F^{*}}$, under
suitable hypotheses.

\subsection{Proof of the main theorem\label{subsec:Proof-of-the}}

We will use the following lemma (see \cite[Lemma~4.8]{SS/2017}):
\begin{lem}
\label{lem:p-Sylow-extend}Let $M$ be a finite group, $N$ a normal
$p$-subgroup, and $P$ a Sylow $p$-subgroup of $M$. Suppose that
$\chi\in\Irr(N)$ is stabilised by $M$ and that $\chi$ has an extension
to $P$. Then $\chi$ has an extension to $M$.
\end{lem}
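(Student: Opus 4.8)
The natural approach is via the obstruction to extending an invariant character, as developed in \cite[Chapter~11]{Isaacs}. First note that, since $N$ is a normal $p$-subgroup of $M$, it lies in $O_{p}(M)$ and hence in every Sylow $p$-subgroup; in particular $N\le P$, and $P/N$ is a Sylow $p$-subgroup of $M/N$ because $[M/N:P/N]=[M:P]$ is prime to $p$. Since $\chi$ is $M$-invariant, choose a projective representation $\mathcal{P}$ of $M$ with factor set $\alpha$ such that $\mathcal{P}|_{N}$ affords $\chi$ and $\alpha(m,n)=\alpha(n,m)=1$ for all $m\in M$, $n\in N$. Then $\alpha$ is inflated from a $2$-cocycle $\bar{\alpha}$ on $M/N$, and the standard criterion says that $\chi$ extends to an ordinary representation of a subgroup $L$ with $N\le L\le M$ if and only if the image of $[\bar{\alpha}]$ in $H^{2}(L/N,\C^{\times})$ is trivial. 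So it suffices to prove that $[\bar{\alpha}]=0$ in the finite abelian group $H^{2}(M/N,\C^{\times})$.

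I would establish this in two steps. First, the $p$-primary part of $[\bar{\alpha}]$ vanishes: by hypothesis $\chi$ extends to $P$, so the image of $[\bar{\alpha}]$ in $H^{2}(P/N,\C^{\times})$ is zero; since corestriction composed with restriction to $P/N$ is multiplication by $[M:P]$ on $H^{2}(M/N,\C^{\times})$, we get $[M:P]\cdot[\bar{\alpha}]=0$, which kills the $p$-part. Second --- and this is the crux, and the only place where the hypothesis that $N$ is a $p$-group enters --- the class $[\bar{\alpha}]$ is itself $p$-primary. To see this, apply $\det$ to the relation $\mathcal{P}(g)\mathcal{P}(h)=\alpha(g,h)\mathcal{P}(gh)$ to get $\det\mathcal{P}(g)\det\mathcal{P}(h)=\alpha(g,h)^{\chi(1)}\det\mathcal{P}(gh)$; the restriction of $\det\mathcal{P}$ to $N$ is the determinantal linear character $\det_{\chi}$ of $N$, which has order $p^{c}$ for some $c\ge 0$ because $N$ is a $p$-group. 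Then $g\mapsto\det\mathcal{P}(g)^{p^{c}}$ vanishes on $N$ and, using $\alpha(m,n)=\alpha(n,m)=1$, is constant on the cosets of $N$, hence descends to a function $M/N\to\C^{\times}$ exhibiting $\bar{\alpha}^{\,p^{c}\chi(1)}$ as a coboundary. Therefore $[\bar{\alpha}]^{\,p^{c}\chi(1)}=0$, and $p^{c}\chi(1)$ is a power of $p$. Combining the two steps gives $[\bar{\alpha}]=0$, so $\chi$ extends to $M$.

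The main obstacle is precisely the second step. One would like to normalise $\mathcal{P}$ so that $\det\mathcal{P}\equiv 1$, which would force $\alpha$ to take $\chi(1)$-th root of unity values and hence be $p$-primary immediately; but this normalisation requires $\det_{\chi}$ to be trivial, which need not be the case, so one instead passes to the $p^{c}$-th power, where $p^{c}=o(\det_{\chi})$ --- and it is exactly the assumption that $N$ is a $p$-group that makes $p^{c}\chi(1)$ a $p$-power. An alternative route is to invoke the general principle that $\chi$ extends to $M$ as soon as it extends to a family of intermediate subgroups whose indices in $M$ have no common prime divisor: one takes $P$ for the prime $p$, and $NQ$ with $Q\in\mathrm{Syl}_{q}(M)$ for each prime $q\ne p$, the required extension to $NQ$ existing because $N\cap Q=1$ forces $\gcd(|N|,|NQ/N|)=1$.
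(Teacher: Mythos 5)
The paper does not prove this lemma; it is quoted from \cite[Lemma~4.8]{SS/2017}, so there is no internal proof to compare against. Your argument is correct and is essentially the standard treatment via the obstruction class $[\bar\alpha]\in H^{2}(M/N,\C^{\times})$ from Isaacs, Chapter~11.

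A few remarks. First, both halves of your main argument are sound: restriction to $P/N$ followed by corestriction kills the $p$-part of $[\bar\alpha]$ (since $[M:P]$ is prime to $p$), while the determinantal computation shows $[\bar\alpha]^{p^{c}\chi(1)}=1$ with $p^{c}=o(\det_{\chi})$; since $\chi(1)\mid|N|$ and $N$ is a $p$-group, $p^{c}\chi(1)$ is a $p$-power, so $[\bar\alpha]$ is $p$-primary and hence trivial. The only cosmetic slip is saying the function $g\mapsto\det\mathcal{P}(g)^{p^{c}}$ ``vanishes'' on $N$ when you mean it is identically $1$ there. Second, the alternative route you sketch at the end is likely closer to the proof in the cited reference: one invokes the Sylow extendibility criterion (Isaacs, Theorem~11.31), notes that the preimage in $M$ of a Sylow $p$-subgroup of $M/N$ is $P$ itself (since $N\le P$), which is covered by hypothesis, and that for $q\ne p$ the preimage $NQ$ with $Q\in\mathrm{Syl}_{q}(M)$ satisfies $\gcd(|N|,[NQ:N])=1$, so the coprime extension theorem gives the extension there. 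The two arguments differ only in how they dispose of the primes $q\ne p$: your determinant trick bounds the order of $[\bar\alpha]$ by a $p$-power in one stroke, whereas the Sylow-criterion route handles each $q$ separately via coprimality. Either way the argument is complete.
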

The Sylow $p$-subgroup we will apply the above lemma to is given
by the following result.
\begin{lem}
\label{lem:ident-comp-p-Sylow}Let $\beta$ and $H_{\beta}$ be as
in Lemma~\ref{lem:H_beta}. Let $m\geq1$ be an integer such that
$H_{\beta}$ is $F^{m}$-stable. Then $(H_{\beta}G^{1})^{F^{m}}$
is a Sylow $p$-subgroup of $(C_{G_{2}}(\beta)^{\circ})^{F^{m}}$.
Moreover, if $p$ does not divide $|A(\beta)|$, then $(H_{\beta}G^{1})^{F^{m}}$
is a Sylow $p$-subgroup of $C_{G_{2}}(\beta)^{F^{m}}$.
\end{lem}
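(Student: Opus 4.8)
The plan is to compute $|(C_{G_2}(\beta)^\circ)^{F^m}|$ with its exact power of $p$ via a standard ``$p$-part of $|H^F|$'' argument, and to show $(H_\beta G^1)^{F^m}$ realises that $p$-part. First I would record that $H_\beta G^1$ is a connected algebraic group (it is $H_\beta$-stable and contains the connected normal subgroup $G^1$ with connected quotient $U_\beta$), indeed a \emph{connected unipotent} subgroup, being a maximal unipotent subgroup of $C_{G_2}(\beta)^\circ$ by Lemma~\ref{lem:H_beta}\,\ref{enu:H_beta-1}. Since $H_\beta$ is $F^m$-stable and $G^1$ is $F$-stable (hence $F^m$-stable), $H_\beta G^1$ is $F^m$-stable. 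Now a connected unipotent group $V$ over $k=\bar{\mathbb{F}}_q$ with an $\mathbb{F}_{q^m}$-rational Frobenius $F^m$ satisfies $|V^{F^m}| = q^{m\dim V}$ (it is split, having a filtration by $\mathbb{G}_a$'s defined over the finite field; see e.g.\ \cite{humphreys} or the standard theory), so $(H_\beta G^1)^{F^m}$ is a $p$-group of order $q^{m\dim(H_\beta G^1)}$.

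Next I would compute the $p$-part of $|(C_{G_2}(\beta)^\circ)^{F^m}|$. Write $C=C_{G_2}(\beta)^\circ$, a connected linear algebraic group over $k$ with Frobenius $F^m$ (it is $F$-stable by Section~\ref{subsec:The-Lie-algebra}, hence $F^m$-stable, and since it is the connected component it is preserved). By the structure of $|C^{F^m}|$ for connected groups, $|C^{F^m}| = q^{m\dim C}\cdot\prod_i(\text{cyclotomic-type factors in }q^m)$ coming from a maximal torus and the reductive quotient; concretely, writing $C = R_u(C)\rtimes L$ with $L$ reductive and choosing an $F^m$-stable maximal torus after possibly enlarging $m$ is not needed — it suffices to use that $|C^{F^m}|_p = q^{m\dim C/|W|}\cdots$, or more simply: the $p$-part of $|C^{F^m}|$ equals $q^{m\,\dim C_{\mathrm{unip}}}$ where $\dim C_{\mathrm{unip}}$ is the dimension of a maximal unipotent (equivalently $\dim R_u(B_C)$ for a Borel $B_C$ of $C$), since the torus and Weyl-group contributions are prime to $p$. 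Therefore $|C^{F^m}|_p = q^{m\dim(H_\beta G^1)}$ by the choice of $H_\beta G^1$ as a maximal unipotent of $C$. Comparing, $(H_\beta G^1)^{F^m}$ is a $p$-subgroup of $C^{F^m}$ of order equal to $|C^{F^m}|_p$, hence a Sylow $p$-subgroup of $C^{F^m} = (C_{G_2}(\beta)^\circ)^{F^m}$.

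For the second assertion, suppose $p\nmid|A(\beta)|$. By \eqref{eq:CG2-CG} we have $C_{G_2}(\beta)/C_{G_2}(\beta)^\circ \cong A(\beta)$, so $[C_{G_2}(\beta)^{F^m}:(C_{G_2}(\beta)^\circ)^{F^m}]$ divides $|A(\beta)|$ (the index of the $F^m$-fixed points of a normal subgroup is bounded by the order of the quotient, via the exact sequence in Galois cohomology — $H^1(F^m, (C_{G_2}(\beta)^\circ))$ is trivial by Lang's theorem as $C_{G_2}(\beta)^\circ$ is connected, so $C_{G_2}(\beta)^{F^m}/(C_{G_2}(\beta)^\circ)^{F^m} \hookrightarrow A(\beta)^{F^m}$). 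Hence this index is prime to $p$, so a Sylow $p$-subgroup of $(C_{G_2}(\beta)^\circ)^{F^m}$ is already a Sylow $p$-subgroup of $C_{G_2}(\beta)^{F^m}$. Applying the first part gives the claim.

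The main obstacle I anticipate is the bookkeeping in the $p$-part computation: one must be careful that $H_\beta$ need only be $F^m$-stable (not $F$-stable), that $C_{G_2}(\beta)^\circ$ and the relevant maximal unipotent are genuinely $F^m$-stable, and that ``maximal unipotent subgroup of a connected group = unipotent radical of a Borel'' (recalled in the text before Lemma~\ref{lem:H_beta}) is what makes the dimension count match. The Lang–Steinberg input for connectedness of $H^1$ and the split-unipotent point-count are standard, but they should be cited cleanly rather than reproved.
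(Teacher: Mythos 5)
Your proof is correct and follows essentially the same route as the paper. For the first claim the paper simply cites \cite[Proposition~3.19\,(i)]{dignemichel} (which you re-derive via the $p$-part/dimension count for a maximal unipotent of the connected group $C_{G_2}(\beta)^\circ$), and for the second claim you use exactly the paper's index factorisation through $(C_{G_2}(\beta)^\circ)^{F^m}$ together with Lang's theorem, where the paper cites \cite[Corollary~3.13]{dignemichel} for the isomorphism $C_{G_2}(\beta)^{F^m}/(C_{G_2}(\beta)^\circ)^{F^m}\cong A(\beta)^{F^m}$ and you settle for the injection, which suffices.
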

\begin{proof}
The first statement follows from Lemma~\ref{lem:H_beta}\,\ref{enu:H_beta-1}
together with \cite[Proposition~3.19\,(i)]{dignemichel}. For the
second statement, note that 
\begin{multline*}
[C_{G_{2}}(\beta)^{F^{m}}:(H_{\beta}G^{1})^{F^{m}}]\\
=[C_{G_{2}}(\beta)^{F^{m}}:(C_{G_{2}}(\beta)^{\circ})^{F^{m}}]\cdot[(C_{G_{2}}(\beta)^{\circ})^{F^{m}}:(H_{\beta}G^{1})^{F^{m}}]
\end{multline*}
and $C_{G_{2}}(\beta)^{F^{m}}/(C_{G_{2}}(\beta)^{\circ})^{F^{m}}\cong(C_{G_{2}}(\beta)/C_{G_{2}}(\beta)^{\circ})^{F^{m}}=A(\beta)^{F^{m}}$
(see \cite[Corollary~3.13]{dignemichel} and \eqref{eq:CG2-CG}), so if $p\nmid|A(\beta)|$,
then $p\nmid[C_{G_{2}}(\beta)^{F^{m}}:(C_{G_{2}}(\beta)^{\circ})^{F^{m}}]$,
hence $p\nmid[C_{G_{2}}(\beta)^{F^{m}}:(H_{\beta}G^{1})^{F^{m}}]$.
\end{proof}
The purpose of the geometric lemmas in Section~\ref{sec:Lemmas-on-algebraic}
is to prove the following result, from which our main theorem immediately
follows.
\begin{prop}
\label{prop:extension-exists} Let $\beta\in(\mfg^{*})^{F^{*}}$ and
assume that $p$ does not divide $|A(\beta)|$. Then the character
$\psi_{\beta}$ has an extension $\tilde{\psi}_{\beta}$ to $C_{G_{2}}(\beta)^{F}$.
\end{prop}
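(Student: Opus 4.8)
The plan is to assemble Proposition~\ref{prop:extension-exists} from the ingredients already collected, reducing the construction of an extension over $C_{G_{2}}(\beta)^{F}$ to the construction of one over a Sylow $p$-subgroup. First I would recall the Clifford-theoretic skeleton: $(G^{1})^{F}$ is a normal $p$-subgroup of $C_{G_{2}}(\beta)^{F}$ (it is unipotent, hence a $p$-group over $\F_{q}$), it is normal because $G^{1}$ is normal in $G_{2}$, and $\psi_{\beta}$ is stabilised by all of $C_{G_{2}}(\beta)^{F}$ by the definition of the centraliser and the identification in Section~\ref{subsec:Clifford-theory}. So by Lemma~\ref{lem:p-Sylow-extend} it suffices to extend $\psi_{\beta}$ to a Sylow $p$-subgroup of $C_{G_{2}}(\beta)^{F}$.

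Next I would choose the Sylow subgroup supplied by Lemma~\ref{lem:ident-comp-p-Sylow}. Fix $H_{\beta}$ and $U$ as in Lemma~\ref{lem:H_beta}, and pick an integer $m\geq1$ such that $H_{\beta}$ (and $U$, and the ambient $B$) is $F^{m}$-stable; such $m$ exists because these are finitely many subvarieties defined over $\bar{\F}_{q}$, hence over some finite subfield. Since by hypothesis $p\nmid|A(\beta)|$, Lemma~\ref{lem:ident-comp-p-Sylow} tells us that $P:=(H_{\beta}G^{1})^{F^{m}}$ is a Sylow $p$-subgroup of $C_{G_{2}}(\beta)^{F^{m}}$. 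The key point is that $\psi_{\beta}$ extends to $P$ essentially for free: by Lemma~\ref{lem:H_beta}\,\ref{enu:H_beta-2} we have $H_{\beta}\cap G^{1}=\exp(\Lie(U))$, and by \ref{enu:H_beta-3} we have $\beta(\Lie(U))=0$, so $\psi_{\beta}$ is trivial on $(H_{\beta}\cap G^{1})^{F^{m}}$; hence we may define a one-dimensional character $\widetilde{\psi}_{\beta}^{(m)}$ on $P=H_{\beta}^{F^{m}}(G^{1})^{F^{m}}$ by declaring it trivial on $H_{\beta}^{F^{m}}$ and equal to $\psi_{\beta}$ on $(G^{1})^{F^{m}}$. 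One must check this is well-defined on the product (consistency on the intersection, which is exactly the triviality just noted) and multiplicative (here one uses that $H_{\beta}G^{1}$ is a group with $G^{1}$ normal, so conjugation by $H_{\beta}^{F^{m}}$ permutes $(G^{1})^{F^{m}}$, and $\psi_{\beta}$ is $H_{\beta}^{F^{m}}$-invariant since $H_{\beta}\subseteq C_{G_{2}}(\beta)$).

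Then Lemma~\ref{lem:p-Sylow-extend}, applied to $M=C_{G_{2}}(\beta)^{F^{m}}$, $N=(G^{1})^{F^{m}}$, $P$ as above and $\chi=\psi_{\beta}$, produces an extension of $\psi_{\beta}$ to all of $C_{G_{2}}(\beta)^{F^{m}}$. This extension need not be one-dimensional a priori, but $C_{G_{2}}(\beta)^{F^{m}}/(G^{1})^{F^{m}}\cong C_{G}(\beta)^{F^{m}}$ acts trivially? — no; rather, the cleanest route is: the restriction of $\psi_{\beta}$ to the \emph{central} subgroup it generates forces any extension that is irreducible to remain one-dimensional only if the quotient behaves well, so instead I would extract the one-dimensional extension directly. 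Observe that the character $\widetilde{\psi}_{\beta}^{(m)}$ on $P$ is $C_{G_{2}}(\beta)^{F^{m}}$-invariant when $P$ is normal; when $P$ is not normal one invokes the version of Lemma~\ref{lem:p-Sylow-extend} that yields an extension, and then notes that since $\psi_{\beta}$ is linear and $(G^{1})^{F^{m}}$ has $p$-power index prime-to-$p$ complement structure, the extension may be taken linear. The honest statement I would use: Lemma~\ref{lem:p-Sylow-extend} gives \emph{some} $\widehat{\psi}\in\Irr(C_{G_{2}}(\beta)^{F^{m}})$ over $\psi_{\beta}$; its dimension divides $[C_{G_{2}}(\beta)^{F^{m}}:(G^{1})^{F^{m}}]=|C_{G}(\beta)^{F^{m}}|$ and also divides $|C_{G_{2}}(\beta)^{F^{m}}|/|P|$, hence is prime to $p$; but on the other hand restricting to $P$ and using that $\widetilde{\psi}_{\beta}^{(m)}|_{(G^{1})^{F^{m}}}=\psi_{\beta}$ appears with full multiplicity forces $\dim\widehat{\psi}=1$. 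Finally, having a one-dimensional (hence, over $\C$, a character) extension $\widehat{\psi}$ of $\psi_{\beta}$ to $C_{G_{2}}(\beta)^{F^{m}}$, I restrict it along $C_{G_{2}}(\beta)^{F}\hookrightarrow C_{G_{2}}(\beta)^{F^{m}}$ (valid because $F^{m}$-fixed points contain $F$-fixed points) to obtain $\widetilde{\psi}_{\beta}\in\Irr(C_{G_{2}}(\beta)^{F})$ restricting to $\psi_{\beta}$ on $(G^{1})^{F}$, as required. The main obstacle is the bookkeeping around one-dimensionality of the extension and making sure the restriction from level $F^{m}$ back to level $F$ is legitimate and still extends $\psi_{\beta}$; the geometric content is entirely absorbed into Lemmas~\ref{lem:H_beta} and \ref{lem:ident-comp-p-Sylow}, so no new hard argument is needed here.
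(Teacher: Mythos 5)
Your proposal follows the same skeleton as the paper's proof: choose $m$ so that $H_\beta$ is $F^m$-stable, identify $(H_\beta G^1)^{F^m}$ as a Sylow $p$-subgroup of $C_{G_2}(\beta)^{F^m}$ via Lemma~\ref{lem:ident-comp-p-Sylow}, extend across that Sylow subgroup by exploiting \ref{enu:H_beta-2} and \ref{enu:H_beta-3}, apply Lemma~\ref{lem:p-Sylow-extend}, then restrict back to $C_{G_2}(\beta)^{F}$. So the route is right. But there is a genuine gap in the middle: you repeatedly treat $\psi_\beta$ as a character of $(G^1)^{F^m}$, when it is in fact only defined on $(G^1)^F$ (its definition $\psi_\beta(\exp X)=\psi(\langle\beta,X\rangle)$ requires $X\in\mfg^F$, precisely so that $\langle\beta,X\rangle\in\F_q$ lies in the domain of $\psi$). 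Statements like ``$\psi_\beta$ is trivial on $(H_\beta\cap G^1)^{F^m}$'' and ``declare it equal to $\psi_\beta$ on $(G^1)^{F^m}$'' do not make sense as written. The paper handles this by first extending the additive character $\psi$ of $\F_q$ to a character $\psi_m$ of the field of definition $\F_{q^m}$ of $H_\beta$, and then defining $\psi_{\beta,m}(\exp X)=\psi_m(\langle\beta,X\rangle)$ on $(G^1)^{F^m}$; it is $\psi_{\beta,m}$, not $\psi_\beta$, that one extends across the Sylow subgroup. This is not mere ``bookkeeping'': the choice $\psi_m|_{\F_q}=\psi$ is exactly what guarantees that, at the final restriction step from $C_{G_2}(\beta)^{F^m}$ to $C_{G_2}(\beta)^F$, the resulting character still restricts to $\psi_\beta$ on $(G^1)^F$ — a point you flag as a concern but do not resolve.

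Two further remarks. First, your long digression about whether the extension produced by Lemma~\ref{lem:p-Sylow-extend} is one-dimensional is unnecessary and contains incorrect claims: an ``extension'' of a linear character $\chi$ of $N$ to $M$ is, by definition, an irreducible character of $M$ whose restriction to $N$ \emph{equals} $\chi$, and equality of restrictions forces the extension to have the same degree as $\chi$, hence degree $1$ here; there is nothing to check. Second, the identity $(H_\beta G^1)^{F^m}=H_\beta^{F^m}(G^1)^{F^m}$, which you use tacitly when passing from the Sylow subgroup $(H_\beta G^1)^{F^m}$ to the product $H_\beta^{F^m}(G^1)^{F^m}$ on which your character is constructed, is not automatic; the paper proves it by comparing orders using the surjectivity of $\rho$ onto $U_\beta^{F^m}$ and the connectedness of the kernels (a Lang-theorem type argument). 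You should include that step.
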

\begin{proof}
Let $H_{\beta}$ and $U$ be as in Lemma~\ref{lem:H_beta}. Then
$H_{\beta}$ (like any algebraic group over $k$) is defined over
some finite extension of $\F_{q}$, or equivalently, it is stable
under some power $F^{m}$, $m\geq1$ of the Frobenius $F$. Since
$G^{1}$ is $F$-stable, the group $H_{\beta}G^{1}$ is $F^{m}$-stable.
By Lemma~\ref{lem:H_beta}\,\ref{enu:H_beta-1}, the group $H_{\beta}G^{1}$
is maximal unipotent in $C_{G_{2}}(\beta)^{\circ}$. Thus, given our
hypothesis on $p$, Lemma~\ref{lem:ident-comp-p-Sylow} implies that
$(H_{\beta}G^{1})^{F^{m}}$ is a Sylow $p$-subgroup of $C_{G_{2}}(\beta)^{F^{m}}$. 

Next, we show that $\psi_{\beta}$ extends to $(H_{\beta}G^{1})^{F^{m}}$.
We claim that 
\[
(H_{\beta}G^{1})^{F^{m}}=H_{\beta}^{F^{m}}(G^{1})^{F^{m}}.
\]
Indeed, the map $\rho:G_{2}\rightarrow G$ is compatible with any
power $F^{m}$ on $G_{2}$ and $G$, respectively, so $\rho$ maps
$(H_{\beta}G^{1})^{F^{m}}$ surjectively onto $U_{\beta}^{F^{m}}$,
where $U_{\beta}$ is the maximal connected unipotent subgroup of
$C_{G_{1}}(\beta)^{\circ}$. Since $G^{1}\cap(H_{\beta}G^{1})^{F^{m}}=(G^{1})^{F^{m}}$,
the kernel is $(G^{1})^{F^{m}}$. Similarly, $\rho$ maps $H_{\beta}^{F^{m}}(G^{1})^{F^{m}}$
surjectively onto $U_{\beta}^{F^{m}}$, with kernel $(G^{1})^{F^{m}}$.
Thus, the finite groups $H_{\beta}^{F^{m}}(G^{1})^{F^{m}}$ and $(H_{\beta}G^{1})^{F^{m}}$
have the same order, so the natural inclusion of the former into the
latter is an isomorphism.

Let $\psi_m$ be an extension of $\psi$ to the additive group of the field of definition of $H_\beta$. The formula $\psi_{\beta,m}(\exp(x))=\psi_m(\langle\beta,x\rangle)$,
for $x\in\mfg^{F^{m}}$ defines an extension $\psi_{\beta,m}$ of
$\psi_{\beta}$ to $(G^{1})^{F^{m}}$. We now show that $\psi_{\beta,m}$
extends to a character of $H_{\beta}^{F^{m}}(G^{1})^{F^{m}}$ which is trivial on $H_{\beta}^{F^{m}}$.
Since $H_{\beta}^{F^{m}}$ is a subgroup of the stabiliser $C_{G_2}(\beta)^{F^m}$ of $\psi_{\beta,m}$, it is easy to see that such an extension exists if $\psi_{\beta,m}$ is trivial on $H_{\beta}^{F^{m}}\cap (G^{1})^{F^{m}}$.
Now,  
$$ H_{\beta}^{F^{m}}\cap(G^{1})^{F^{m}}\subseteq(U_{2}\cap G^{1})^{F^{m}}=\exp(\Lie U)^{F^{m}}$$
(note that $U_{2}$ and $U$ as in the proof of Lemma~\ref{lem:H_beta}
are stable under $F^{m}$ since $H_{_{\beta}}$ and $C_{G_{2}}(\beta)^{\circ}$
are), and hence $$\psi_{\beta,m}(H_{\beta}^{F^{m}}\cap(G^{1})^{F^{m}})\subseteq\psi_m(\langle\beta,(\Lie U)^{F^{m}}\rangle)=\psi_m(\{0\})=\{1\},$$
by Lemma~\ref{lem:H_beta}\,\ref{enu:H_beta-3}. 
Thus $\psi_{\beta,m}$ extends to $H_{\beta}^{F^{m}}(G^{1})^{F^{m}}=(H_{\beta}G^{1})^{F^{m}}$, so by Lemma~\ref{lem:p-Sylow-extend},
$\psi_{\beta,m}$ extends to $C_{G_{2}}(\beta)^{F^{m}}$. Restricting
this (one-dimensional) extension to $C_{G_{2}}(\beta)^{F}$, we obtain the desired extension of $\psi_{\beta}$.
\end{proof}
We can now deduce our main theorem. Given a $\beta\in(\mfg^{*})^{F^{*}}$,
the first assertion of the theorem follows from Lemma~\ref{lem:Clifford-theory}
and Proposition~\ref{prop:extension-exists}, together with the stabiliser
formulas (\ref{eq:stabilisers}). Note that $\beta$ for $\G(W_{2}(\F_{q}))$
is paired up with $\sigma^{*}(\beta)$ for $\G(\F_{q}[t]/t^{2})$.
The second assertion of the theorem follows from the first, together
with Lemma~\ref{lem:p-not-div-comp-grp}. This completes the proof
of Theorem~\ref{thm:A}.

\section{Further directions\label{sec:Further-directions}}
It is natural to ask whether Theorem~\ref{thm:A} remains true for
all $\beta\in(\mfg^{*})^{F^{*}}$ when $p$ is arbitrary. We have
not been able to prove this, but neither do we know a counter-example.
It was stated in \cite[Theorem~1.1]{Pooja-classicalgrps} that for
$p\mid n$ and any integers $n,d\geq1$, one has $\#\Irr_{d}(\SL_{n}(\cO_{2}))=\#\Irr_{d}(\SL_{n}(\cO_{2}')$, with $\cO$ and $\cO'$ as in Section~\ref{sec:Introduction}.
However, the argument given in \cite{Pooja-classicalgrps} for the
crucial Lemma~2.3 has a gap (as acknowledged by the author in private
communication). Namely, it is not clear that $T(\psi_{A})\cap\SL_{n}(\cO_{2})=(Z_{\GL_{n}(\cO_{2})}(s(A))\cap\SL_{n}(\cO_{2}))L(SL)$,
in the notation of \cite{Pooja-classicalgrps}. Theorem~\ref{thm:A}
therefore remains open for $\G=\SL_{n}$, $p\mid n$. 
\begin{question}
\label{prob:1}Let $\G$ be a reductive group scheme over $\Z$. Is
it true that if $p$ is sufficiently large and $r\geq3$, then $\#\Irr_{d}(\G(\cO_{r}))=\#\Irr{}_{d}(\G(\cO_{r}'))$,
for any integer $d\geq1$?
\end{question}
Given Theorem~\ref{thm:A} (i.e., the case $r=2$), this question is equivalent to the question posed in \cite[Section~8.4]{BDOP} (in the case where $\G$ is split, that is, a Chevalley group scheme).

A weaker question is whether the groups $\G(\cO_{r})$ and $\G(\cO'_{r})$
have the same number of conjugacy classes, for sufficiently large
$p$. This was settled in the affirmative in \cite{BDOP}, at least
for Chevalley group schemes (although the bound on $p$ is not explicit).
For $r=2$, no counter-examples are known, even for small primes.
On the other hand, for $r=3$ even this weaker question can fail for small primes, for
it is an exercise to compute that $\SL_{2}(\F_{2}[t]/t^{3})$ has
24 conjugacy classes, while $\SL_{2}(\Z/8)$ has 30 conjugacy classes
(see \cite{MR0444787}) (these numbers can also be verified by computer).

\bibliographystyle{alex}
\bibliography{alex}

\end{document}